\documentclass{amsart}

\usepackage{amssymb}
\usepackage{dsfont}

\newtheorem{theorem}{Theorem}[section]

\newtheorem{proposition}[theorem]{Proposition}
\newtheorem{corollary}[theorem]{Corollary}
\newtheorem{lemma}[theorem]{Lemma}

\theoremstyle{definition}
\newtheorem{definition}[theorem]{Definition}

\theoremstyle{remark}

\numberwithin{equation}{section}

%
%
\newcommand{\nsubset}{\not\subset}
\newcommand{\enproof}{\hspace*{\stretch{1}}\qedsymbol}  


\newcommand{\C}{\mathcal{C}}
\newcommand{\set}[1]{\left\{#1\right\}}  
\newcommand{\closure}[1]{\overline{#1}}  


\newcommand{\reals}{\mathds{R}}    
\newcommand{\naturals}{\mathds{N}}    


\newcommand{\D}{\mathcal{D}}
\newcommand{\F}{\mathcal{F}}

\newcommand{\G}{\mathcal{G}}

\newcommand{\Pp}{\mathcal{P}}
\newcommand{\Q}{\mathcal{Q}}

\newcommand{\cardinal}[1]{\left|#1\right|}

\newcommand{\continuum}{\mathfrak{c}}

\newcommand{\onepointcompactification}[1]{#1^\flat}

\newcommand{\zero}{\textbf{Z}}

\newcommand{\limitpoints}{\partial}

\newcommand{\level}[2]{\partial^{(#2)}#1}

%
%

\begin{document}

\title[Prime ideals in $\C_0(\reals)$]{Uncountable families of prime $z$-ideals in $\C_0(\reals)$}

\author{Hung Le Pham}
\address{Department of Mathematical and Statistical Sciences,
    University of Alberta, Edmonton, Alberta T6G 2G1, Canada}
\email{hlpham@math.ualberta.ca}

\subjclass[2000]{Primary 46J10; Secondary 13C05}

\keywords{Algebra of continuous functions, prime ideal, locally
compact space}

\thanks{This research is supported by a Killam Postdoctoral Fellowship and a Honorary PIMS Postdoctoral Fellowship}

\begin{abstract}
Denote by $\continuum=2^{\aleph_0}$ the cardinal of continuum. We
construct an intriguing family $(P_\alpha:\ \alpha\in\continuum)$
of prime $z$-ideals in $\C_0(\reals)$ with the following
properties:
\begin{itemize}
    \item If $f\in P_{i_0}$ for some $i_0\in\continuum$, then
        $f\in P_i$ for all but finitely many $i\in \continuum$;
        \item $\bigcap_{i\neq i_0} P_i \nsubset P_{i_0}$ for each $\i_0\in \continuum$.
\end{itemize}
We also construct a well-ordered increasing chain, as well as a
well-ordered decreasing chain, of order type $\kappa$ of prime
$z$-ideals in $\C_0(\reals)$ for any ordinal $\kappa$ of
cardinality $\continuum$.
\end{abstract}

\maketitle

\section{Introduction}
\label{introduction}

Let $\Omega$ be a locally compact space. In \cite{pham2005b}, we
introduced the notion of pseudo-finite family of prime ideals as
follows.

\begin{definition}
    \label{pseudo_finiteness_definition}
An indexed family $(P_i)_{i\in S}$ of prime ideals in
$\C_0(\Omega)$ is \emph{pseudo-finite} if $f\in P_i$ for all but
finitely many $i\in S$ whenever $f\in \bigcup_{i\in S} P_i$.
\end{definition}
A pseudo-finite family $(P_i:\ i\in S)$ of prime ideals in
$\C_0(\Omega)$ has many interesting properties, for example, when
$S$ is infinite, the union $\bigcup_{i\in S} P_i$ is again a prime
ideal and any infinite subfamily of $(P_i)$ gives rise to the same
union.

A pseudo-finite family of prime ideals $(P_i:\ i\in S)$ is said to
be \emph{non-redundant} if for every proper subset $T$ of $S$,
$\bigcap_{i\in T} P_i\neq \bigcap_{i\in S} P_i$. Non-redundancy is
equivalent to either of the following (\cite[Lemma
3.4]{pham2005b}):
\begin{itemize}
        \item[(a)] $P_\alpha \nsubset P_{\beta}\quad(\alpha\neq \beta\in S)$;
        \item[(b)] $\bigcap_{\beta\neq\alpha} P_\beta \nsubset P_{\alpha}$ for each $\alpha\in S$.
\end{itemize}
Note that (a) is apparently weaker, whereas (b) is apparently
stronger than the non-redundancy. Thus, in this case,
$\bigcap_{i\in S} P_i$ cannot be written as the intersection of
less than $\cardinal{S}$ prime ideals and
$\cardinal{S}\le\cardinal{\C_0(\Omega)}$. Furthermore, for every
pseudo-finite family of prime ideals, the subfamily consisting of
those ideals that are minimal in the family is non-redundant and
pseudo-finite and has the same intersection as the original
family.

The notion of pseudo-finiteness has a connection with automatic
continuity theory. It is proved in \cite{pham2005b} that, assuming
the Continuum Hypothesis, for each pseudo-finite family $(P_i:\
i\in S)$ of prime ideals in $\C_0(\Omega)$ such that
$\cardinal{\C_0(\Omega)/\bigcap_{i\in S} P_i}=\continuum$, there
exists a homomorphism from $\C_0(\Omega)$ into a Banach algebra
whose continuity ideal  is $\bigcap_{i\in S} P_i$. Recall that the
continuity ideal is the largest ideal of $\C_0(\Omega)$ on which
the homomorphism is continuous, and it is always an intersection
of prime ideals (see \cite{dales2000} for more details).

Suppose that $\Omega$ is metrizable and that
$\level{\onepointcompactification{\Omega}}{\infty}\neq \emptyset$;
see \S\ref{basicdefinitions} for the definition. Examples of such
spaces include many countable locally compact spaces and all
uncountable locally compact Polish spaces. For such $\Omega$, it
is known that there exists an infinite non-redundant pseudo-finite
sequence of prime ideals in $\C_0(\Omega)$ (\cite{pham2005b}).
Here, we are going to show that there exists even a non-redundant
pseudo-finite family $(P_i: i\in\continuum)$ of prime ideals in
$\C_0(\Omega)$ (Theorem \ref{constructpsfprimes}). As a
consequence, assuming the Continuum Hypothesis, there exists a
homomorphism from $\C_0(\Omega)$ into a Banach algebra whose
continuity ideal cannot be written as intersection of countably
many prime ideals.

Note that when $\Omega$ is metrizable and
$\level{\onepointcompactification{\Omega}}{\infty}= \emptyset$,
then every non-redundant pseudo-finite families of prime ideals in
$\C_0(\Omega)$ is finite and the continuity ideal of every
homomorphism from $\C_0(\Omega)$ into a Banach algebra is the
intersection of finitely many prime ideals. (\cite{pham2005b})

In \S\ref{decreasingsection}, we shall construct, for every
ordinal $\kappa$ of cardinality at most $\continuum$, a
well-ordered decreasing chain of order type $\kappa$ of prime
$z$-filters on any uncountable locally compact Polish space. In
particular, there exists a well-ordered decreasing chain of order
type $\kappa$ of prime $z$-filters beginning with any non-minimal
prime $z$-filters on $\reals$. (This was shown for
$\kappa\le\omega_1^2$, where $\omega_1$ is the first uncountable
ordinal, in Theorems 8.5, 13.2 and Remark 13.2 of
\cite{mandelker1968}.) We also show that there are various
countable compact subspaces of $\reals$ on which there is a
well-ordered decreasing chain of order type $\continuum$ of prime
$z$-filters ($\continuum$ is identified with the smallest ordinal
of cardinality $\continuum$).

In \cite{mandelker1968}, it was asked whether there exists an
uncountable well-ordered increasing chain of prime $z$-filters on
$\reals$. We shall construct in \S\ref{increasingsection}, for
every ordinal $\kappa$ of cardinality at most $\continuum$, a
well-ordered increasing chain of order type $\kappa$ of prime
$z$-filters on any uncountable locally compact Polish space. We
also construct well-ordered increasing chains of order type
$\continuum$ of prime $z$-filters on various countable compact
subspaces of $\reals$.

All three constructions have as a common ingredient the result due
to Sierpinski that $\naturals$ can be expressed as the union of
$\continuum$ ''almost disjoint'' infinite subsets.

\section{Preliminary definitions and notations}
\label{basicdefinitions}

For details of the theory of the algebras of continuous functions,
see \cite{gillmanjerison1960}.

Let $\Omega$ be a locally compact space; our convention is that
the topological spaces are always Hausdorff. The \emph{one-point
compactification} of $\Omega$ is denoted by
$\onepointcompactification{\Omega}$.

For each prime ideal $P$ in $\C_0(\Omega)$, either there exists a
unique point $p\in\Omega$ such that $f(p)=0$ ($f\in P$), in which
case, we say that $P$ \emph{is supported at the point} $p$; or
otherwise, we say that $P$ \emph{is supported at the (point at)
infinity}.

It is an important fact that, for each prime ideal $P$ in
$\C_0(\Omega)$, the set of prime ideals in $\C_0(\Omega)$ which
contain $P$ is a chain with respect to the inclusion relation.

For each function $f$ continuous on $\Omega$, the \emph{zero set}
of $f$ is denoted by $\zero(f)$. Define
$\zero[\Omega]=\set{\zero(f): \ f\in\C(\Omega)}$. For each closed
subset $Z\subset\Omega$, we have $Z=\zero(f)$ for some function
$f\in\C_0(\Omega)$ if and only if $\Omega\setminus Z$ is
$\sigma$-compact. An ideal $I$ of $\C_0(\Omega)$ is a
\emph{$z$-ideal} if $g\in I$ whenever $g\in\C_0(\Omega)$,
$\zero(g)\supset\zero(f)$ and $f\in I$.

A \emph{$z$-filter} $\F$ on $\Omega$ is a non-empty proper subset
of $\zero[\Omega]$ that is closed under finite intersection and
supersets. Each $z$-filter $\F$ associates with the ideal
\[
    \zero^{-1}[\F]=\set{f\in\C(\Omega):\ \zero(f)\in\F}\quad \textrm{of}\quad \C(\Omega).
\]
A $z$-filter $\Pp$ is a \emph{prime $z$-filter} if $Z_1\cup
Z_2\notin \Pp$ whenever $Z_1,Z_2\in\zero[\Omega]\setminus \Pp$.

Let $\Pp$ be a prime $z$-filter on $\Omega$. Then we say that
$\Pp$ \emph{is supported at a point} $p\in\Omega$, if $p\in Z$ for
each $Z\in\Pp$; if there exists no such $p$, we say that $\Pp$
\emph{is supported at the (point at) infinity}. The support point
of each prime $z$-filter $\Pp$ coincides with the support point of
the prime $z$-ideal $\C_0(\Omega)\cap\zero^{-1}[\Pp]$.

Let $\Omega$ be a compact space. Define
$\level{\Omega}{1}=\limitpoints \Omega$ to be \emph{the set of all
limit points} of $\Omega$. Since $\Omega$ is compact,
$\limitpoints \Omega$ is non-empty unless $\Omega$ is finite. We
then define inductively a non-increasing sequence
$\left(\level{\Omega}{n}:n\in\naturals\right)$ of compact subsets
of $\Omega$ by setting $\level{\Omega}{n+1}
    =\limitpoints\left(\level{\Omega}{n}\right)$ for each $n\in\naturals$.
Set $\level{\Omega}{\infty}
=\bigcap_{n=1}^\infty\level{\Omega}{n}$. By the compactness,
either $\level{\Omega}{\infty}$ is non-empty or
$\level{\Omega}{l}$ is empty for some $l\in\naturals$.

A \emph{Polish space} is a separable completely metrizable space.
Every separable metrizable locally compact space is a Polish
space.

\section{Pseudo-finite families of prime ideals and prime $z$-filters}
\label{psedofinitesection}

Let $\Omega$ be a locally compact space. First, we shall make a
connection between pseudo-finite families of prime ideals and
pseudo-finite families of prime $z$-ideals in $\C_0(\Omega)$. For
each closed subset $E$ of $\Omega$, we define the $z$-ideal
\[
    K_E=\set{f\in\C_0(\Omega):\ E\subset\zero(f)}.
\]
The following is \cite[3.3 and 3.4]{mason1980}, we shall give here
a combined proof.

\begin{lemma}\cite{mason1980}
    Let $I$ be an ideal in $\C_0(\Omega)$. Then
    \[
        I^z=\bigcup\set{K_E:\ E\ \textrm{is closed in}\ \Omega\ \textrm{and}\ K_E\subset I}
    \]
    is the largest $z$-ideal contained in $I$. If $I$ is a prime
    ideal then so is $I^z$.
\end{lemma}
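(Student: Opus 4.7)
The plan is to work with the equivalent reformulation
\[
    f \in I^z \iff K_{\zero(f)} \subset I,
\]
which streamlines everything. The forward direction is immediate from $f \in K_{\zero(f)}$; conversely, if $f \in K_E$ with $K_E \subset I$, then $E \subset \zero(f)$, so $K_{\zero(f)} \subset K_E \subset I$.

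With this in hand, several properties are immediate: $I^z \subset I$ (apply $h = f$ to $K_{\zero(f)} \subset I$); the $z$-property (if $f \in I^z$ and $\zero(g) \supset \zero(f)$, then $K_{\zero(g)} \subset K_{\zero(f)} \subset I$, so $g \in I^z$); closure under $\C_0(\Omega)$-multiplication (since $\zero(hf) \supset \zero(f)$); and maximality (if $J \subset I$ is a $z$-ideal and $f \in J$, the $z$-property of $J$ gives $K_{\zero(f)} \subset J \subset I$, so $f \in I^z$). The only genuinely nontrivial step is closure under addition.

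For this, I would use the classical partition-of-unity trick. Given $f, g \in I^z$ and $h \in \C_0(\Omega)$ with $\zero(h) \supset \zero(f+g) \supset \zero(f) \cap \zero(g)$, set
\[
    h_1 = \frac{|f|^2}{|f|^2+|g|^2}\, h, \qquad h_2 = \frac{|g|^2}{|f|^2+|g|^2}\, h
\]
on the cozero set of $|f|^2 + |g|^2$, and $h_1 = h_2 = 0$ on $\zero(f) \cap \zero(g)$. The multipliers are bounded by $1$ and $h$ vanishes on $\zero(f) \cap \zero(g)$, so $h_1, h_2 \in \C_0(\Omega)$; by inspection, $\zero(h_1) \supset \zero(f)$ and $\zero(h_2) \supset \zero(g)$, giving $h_1 \in K_{\zero(f)} \subset I$ and $h_2 \in K_{\zero(g)} \subset I$. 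Thus $h = h_1 + h_2 \in I$, so $f + g \in I^z$.

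For the prime case, suppose $I$ is prime and $fg \in I^z$ but $f, g \notin I^z$. Then $K_{\zero(f)} \nsubset I$ and $K_{\zero(g)} \nsubset I$, so pick $f_1 \in K_{\zero(f)} \setminus I$ and $g_1 \in K_{\zero(g)} \setminus I$. Then $\zero(f_1 g_1) = \zero(f_1) \cup \zero(g_1) \supset \zero(f) \cup \zero(g) = \zero(fg)$, so $f_1 g_1 \in K_{\zero(fg)} \subset I$; primeness of $I$ forces $f_1 \in I$ or $g_1 \in I$, a contradiction. The one delicate point in the whole proof is the continuity of $h_1, h_2$ at points of $\zero(f) \cap \zero(g)$, which is handled by the uniform bound $|h_i| \le |h|$ together with the vanishing of $h$ on that set.
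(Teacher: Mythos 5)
Your proof is correct and follows essentially the same route as the paper: both rest on the reformulation $f\in I^z\iff K_{\zero(f)}\subset I$, and your primeness argument (picking $f_1\in K_{\zero(f)}\setminus I$, $g_1\in K_{\zero(g)}\setminus I$ and multiplying) is exactly the paper's, stated in contrapositive form. The only divergence is that where the paper disposes of additivity by citing the fact that the sum of two $z$-ideals is a $z$-ideal (so that the union coincides with the algebraic sum $\sum K_E$), you inline the proof of that cited fact via the partition-of-unity decomposition $h=h_1+h_2$, which makes your write-up more self-contained but not a different argument.
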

\begin{proof}
    It is easy to see that $I^z$ contains every $z$-ideal contained in $I$. Since the sum of two
    $z$-ideals is again a $z$-ideal,
    we see that
    \[
        I^z=\sum\set{K_E:\ E\ \textrm{is closed in}\ \Omega\ \textrm{and}\ K_E\subset
        I};
    \]
    where the sum is algebraic. Thus $I^z$ is the largest $z$-ideal contained in $I$.

    Now, suppose that $I$ is prime. Let $f_1,f_2\in\C_0(\Omega)\setminus
    I^z$. Then there exists $g_1,g_2\in\C_0(\Omega)\setminus
    I$ such that $\zero(g_i)\supset \zero(f_i)$. Then $\zero(g_1g_2)\supset\zero(f_1f_2)$.
    The primeness of $I$ implies that $g_1g_2\notin I$. So
    $f_1f_2\notin I^z$.
\end{proof}

The following strengthens the implication (a)$\Rightarrow$(c) of
\cite[Lemma 8.4]{pham2005b}.

\begin{proposition}
    Let $\Omega$ be a locally compact space.
    Let $(P_i:\ i\in S)$ be an infinite non-redundant pseudo-finite
    family of prime ideals in $\C_0(\Omega)$. Then $P=\bigcup_{i\in
    S} P_i$ is a prime $z$-ideal, and $(P_i^z:\ i\in S)$ is a non-redundant
    pseudo-finite family of prime $z$-ideals whose union is $P$ such
    that $P_i^z\subset P_i$ ($i\in S$).
\end{proposition}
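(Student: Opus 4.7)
The plan is to leverage the preceding lemma (which gives that each $P_i^z$ is a prime $z$-ideal with $P_i^z \subset P_i$) together with the chain property of prime ideals noted in \S\ref{basicdefinitions}. First, non-redundancy of $(P_i^z)$ follows cleanly: if $P_i^z \subset P_j^z$ for some $i \neq j$, then $P_i$ and $P_j$ are both prime ideals containing the prime ideal $P_i^z$, which forces them to be comparable and contradicts criterion (a) for $(P_i)$. Hence $(P_i^z)$ satisfies (a) and is non-redundant.

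For pseudo-finiteness of $(P_i^z)$, given $f \in P_{i_0}^z$, the lemma's characterization gives $K_{\zero(f)} \subset P_{i_0}$, and I aim for $K_{\zero(f)} \subset P_i$ for all but finitely many $i$. Suppose the set $T = \set{i \in S : K_{\zero(f)} \nsubset P_i}$ is infinite and, for each $i \in T$, pick a witness $g_i \in K_{\zero(f)} \setminus P_i$. Using the non-redundancy witnesses $u_i \in \bigcap_{j \neq i} P_j \setminus P_i$ provided by criterion (b), the products $\tilde{g}_i = g_i u_i$ still lie in $K_{\zero(f)}$ (as $\zero(\tilde{g}_i) \supset \zero(g_i) \supset \zero(f)$) and, by primeness of $P_i$, additionally satisfy $\tilde{g}_i \in \bigcap_{j \neq i} P_j$ and $\tilde{g}_i \notin P_i$. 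Selecting a countable subfamily $(i_n) \subset T$ and forming $h = \sum_{n} c_n \tilde{g}_{i_n}$ with rapidly decreasing coefficients, chosen so that the series converges in norm in $\C_0(\Omega)$ (whence $h$ lies in the norm-closed set $K_{\zero(f)}$), produces an element of $P_{i_0} \subset P$. Pseudo-finiteness of $(P_i)$ then forces $h \in P_i$ for cofinitely many $i$; but for each $n$, modulo $P_{i_n}$ every tail term $c_m \tilde{g}_{i_m}$ with $m \neq n$ vanishes, so $h \equiv c_n \tilde{g}_{i_n} \not\equiv 0 \pmod{P_{i_n}}$, giving $h \notin P_{i_n}$ for every $n$, the desired contradiction.

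Once pseudo-finiteness of $(P_i^z)$ is established, the union $\bigcup_i P_i^z$ is a union of $z$-ideals inside the ideal $P$, and will be a $z$-ideal once the equality $\bigcup_i P_i^z = P$ is shown. The latter requires $f \in P_{i_0}^z$ for some $i_0$ whenever $f \in P$, which is more delicate than the above diagonal argument because the natural witnesses $g \in K_{\zero(f)}$ are not a priori known to lie in $P$; here a multiplicative trick exploiting $fg \in P$ for $g \in K_{\zero(f)}$ together with primeness of $(P_i)$ should bring each relevant $g$ into sufficiently many $P_i$'s to reuse the contradiction. The main obstacle throughout is the assertion $h \equiv c_n \tilde{g}_{i_n} \pmod{P_{i_n}}$ in the diagonal construction: this requires the infinite tail $\sum_{m \neq n} c_m \tilde{g}_{i_m}$ to lie in $P_{i_n}$, and although each individual term lies in $P_{i_n}$ by construction, prime ideals of $\C_0(\Omega)$ are not in general norm-closed, so passing from the finite partial sums to the norm limit is delicate. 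This step will either require invoking closedness properties of the $P_i$ supplied by the context, or replacing the infinite aggregate by a finite combinatorial construction that still contradicts the finite bound promised by pseudo-finiteness.
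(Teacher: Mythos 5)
Your non-redundancy argument is exactly the paper's and is fine. The rest has two genuine gaps, one of which you candidly flag yourself but do not close. The diagonal step $h\equiv c_n\tilde g_{i_n}\pmod{P_{i_n}}$ does indeed fail as written: the tail $\sum_{m\neq n}c_m\tilde g_{i_m}$ is a norm limit of elements of $P_{i_n}$, and prime ideals of $\C_0(\Omega)$ are not norm-closed, so no congruence is available. Neither of your proposed escapes works: there is no closedness of the $P_i$ to invoke, and a finite truncation only shows $f\notin P_i^z$ for finitely many $i$, which is no contradiction. The fix the paper uses is order-theoretic rather than algebraic: prime ideals in $\C_0(\Omega)$ are absolutely convex (Gillman--Jerison), so one chooses the witnesses $f_n\notin P_{\alpha_n}$ with $\zero(f_n)\supset\zero(f)$ normalized to satisfy $0\le f_n\le 2^{-n}$, and then $f_*=\sum_n f_n$ satisfies $f_*\ge f_n\ge 0$, whence $f_*\notin P_{\alpha_n}$ for \emph{every} $n$ directly from convexity --- no congruence modulo an infinite tail is needed. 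Since also $\zero(f_*)\supset\zero(f)$, one gets $f_*\in P$ and a contradiction with pseudo-finiteness of the original family $(P_i)$. Your auxiliary multipliers $u_i$ are unnecessary in this scheme. Note also that the argument is best run for an arbitrary $f\in P$ (not just $f\in P_{i_0}^z$), which simultaneously yields that the union of the $P_i^z$ is all of $P$.

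The second gap is that you never establish that $P$ is a prime $z$-ideal, and your plan to deduce it from $\bigcup_i P_i^z=P$ is circular in the paper's ordering: the convexity argument above needs to conclude $f_*\in P$ from $\zero(f_*)\supset\zero(f)$ and $f\in P$, i.e.\ it already uses that $P$ is a $z$-ideal. The paper proves this first and independently, via Mason's theorem that the sum of two non-comparable prime ideals in $\C_0(\Omega)$ is a prime $z$-ideal: if $P$ were not a $z$-ideal, one inductively picks $\alpha_{n+1}$ with $P_{\alpha_{n+1}}\nsubset\sum_{i=1}^nP_{\alpha_i}$ (possible since each such sum is a prime $z$-ideal properly inside $P$), and pseudo-finiteness forces $P=\bigcup_n P_{\alpha_n}=\bigcup_n\sum_{i=1}^nP_{\alpha_i}$, an increasing union of $z$-ideals and hence a $z$-ideal --- contradiction. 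Your sketched ``multiplicative trick'' with $fg\in P$ is not a substitute for this; as it stands it is not an argument. So the skeleton of your pseudo-finiteness reduction is the right one, but both the convexity input and the Mason-type input are essential and missing.
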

\begin{proof}
    We shall need another theorem of \cite{mason1980} which say that the sum
    of two non-comparable prime ideals in $\C_0(\Omega)$ is indeed a
    prime $z$-ideal (\cite[3.2]{mason1980}).

    We know that $P$ must be a prime ideal. Assume toward a contradiction that
    $P$ is not a prime $z$-ideal. Choose $\alpha_1\neq\alpha_2\in S$
    arbitrary. Then $P_{\alpha_1}+P_{\alpha_2}$ is a prime $z$-ideal.
    Suppose that we already have distinct indices $\alpha_1,\ldots,
    \alpha_n\in S$ such that $\sum_{i=1}^n
    P_{\alpha_i}$ is a prime $z$-ideal. Then $P\neq\sum_{i=1}^n
    P_{\alpha_i}$, and so we can find $\alpha_{n+1}\in S$ such that $P_{\alpha_{n+1}}\nsubset \sum_{i=1}^n
    P_{\alpha_i}$. The induction can be continued. However, this
    gives a contradiction since then
    \[
        P=\bigcup_{n=1}^\infty
        P_{\alpha_n}=\bigcup_{n=1}^\infty\sum_{i=1}^n P_{\alpha_i}
    \]
    is a $z$-ideal. Hence, $P$ is a prime $z$-ideal.

    We \emph{claim} that $(P_i^z:\ i\in S)$ is a pseudo-finite
    family with union $P$. Indeed, assume toward a contradiction
    that there exists $f\in P$ and distinct $\alpha_n\in S$ ($n\in\naturals$)
    such that $f\notin P_{\alpha_n}^z$ ($n\in\naturals$). For each
    $n$, we can then find $f_n\notin P_{\alpha_n}$ such that
    $\zero(f_n)\supset \zero(f)$; we can further assume that $0\le f_n\le
    2^{-n}$. Define $f_*=\sum_{n=1}^\infty f_n$. Then we see that
    $f_n\le f_*$ so $f_*\notin P_{\alpha_n}$ ($n\in\naturals$),
    and that $\zero(f_*)\supset \zero(f)$ so $f_*\in P$. This is a contradiction to the pseudo-finiteness of
    $(P_i:\ i\in S)$.

    It remains to prove the non-redundancy of $(P_i^z:\ i\in S)$.
    So, assume that $P_\alpha^z\subset P_\beta^z$ for some $\alpha\neq
    \beta\in S$. Then $P_\alpha^z$ is contained in both $P_\alpha$ and
    $P_\beta$, and so $P_\alpha$ and $P_\beta$ are in a chain. This
    contradicts the non-redundancy of $(P_i:\ i\in S)$.
\end{proof}

Conversely, it is obvious that if $(Q_i:\ i\in S)$ is a
pseudo-finite family of prime ($z$-)ideals and $P_i$ is a prime
ideal containing $Q_i$ and contained in $\bigcup_{\alpha\in S}
Q_\alpha$ ($i\in S$) then $(P_i:\ i\in S)$ is a pseudo-finite family
of prime ideals.

We define a similar notion of pseudo-finite families of prime
$z$-filters.

\begin{definition}
An indexed family $(\Pp_i)_{i\in S}$ of prime $z$-filters $\Omega$
is \emph{pseudo-finite} if $Z\in \Pp_i$ for all but finitely many
$i\in S$ whenever $Z\in \bigcup_{i\in S} \Pp_i$.
\end{definition}
A pseudo-finite family of prime $z$-filters $(\Pp_i:\ i\in S)$ is
said to be \emph{non-redundant} if for every proper subset $T$ of
$S$, $\bigcap_{i\in T} \Pp_i\neq \bigcap_{i\in S} \Pp_i$. Similar
to \cite[Lemma 3.4]{pham2005b} we have the following.

\begin{lemma}
    \label{nonredundancy}
    Let $(\Pp_\alpha:\alpha\in S)$ be a pseudo-finite family of prime $z$-filters on $\Omega$.
    Then the following are equivalent:
    \begin{itemize}
        \item[(a)] $(\Pp_\alpha)$ is non-redundant;
        \item[(b)] $\Pp_\alpha \nsubset \Pp_{\beta}\quad(\alpha\neq \beta\in S)$;
        \item[(c)] $\bigcap_{\beta\neq\alpha} \Pp_\beta \nsubset \Pp_{\alpha}$ for each $\alpha\in S$.
    \end{itemize}
\end{lemma}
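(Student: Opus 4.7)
The plan is to prove the cycle of implications (c)$\Rightarrow$(a)$\Rightarrow$(b)$\Rightarrow$(c), in direct parallel with \cite[Lemma 3.4]{pham2005b} but working with zero sets in a $z$-filter rather than with elements of a prime ideal.

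The implications (c)$\Rightarrow$(a) and (a)$\Rightarrow$(b) are formal. For (c)$\Rightarrow$(a), given $T\subsetneq S$ I would pick $\alpha_0\in S\setminus T$, apply (c) to find $Z\in\bigcap_{\beta\neq\alpha_0}\Pp_\beta$ with $Z\notin\Pp_{\alpha_0}$, and observe that $Z\in\bigcap_{i\in T}\Pp_i$ while $Z\notin\bigcap_{i\in S}\Pp_i$. For (a)$\Rightarrow$(b), I would argue contrapositively: if $\Pp_\alpha\subset\Pp_\beta$ with $\alpha\neq\beta$, then $T=S\setminus\{\beta\}$ satisfies $\bigcap_{i\in T}\Pp_i\subset\Pp_\alpha\subset\Pp_\beta$, so $\bigcap_{i\in T}\Pp_i=\bigcap_{i\in S}\Pp_i$, contradicting (a).

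The substance is in (b)$\Rightarrow$(c), where both pseudo-finiteness and the primeness of the $z$-filters enter. Fix $\alpha_0\in S$; by (b), choose for each $\beta\neq\alpha_0$ a zero set $Z_\beta\in\Pp_\beta\setminus\Pp_{\alpha_0}$. Pick any particular $\beta_0\neq\alpha_0$; since $Z_{\beta_0}\in\bigcup_{i\in S}\Pp_i$, pseudo-finiteness makes $F:=\{\beta_0\}\cup\{i\in S\setminus\{\alpha_0\}:\ Z_{\beta_0}\notin\Pp_i\}$ a finite subset of $S\setminus\{\alpha_0\}$. Setting $Z_*:=\bigcup_{\beta\in F}Z_\beta$ (a zero set as a finite union of zero sets), I expect $Z_*$ to witness (c): iterated primeness of $\Pp_{\alpha_0}$ forces $Z_*\notin\Pp_{\alpha_0}$ because no constituent $Z_\beta$ with $\beta\in F$ is; and for each $i\in S\setminus\{\alpha_0\}$, a case split shows $Z_*$ contains either $Z_i$ (when $i\in F$) or $Z_{\beta_0}$ (when $i\notin F$, where $Z_{\beta_0}\in\Pp_i$ by the very definition of $F$), so closure of $\Pp_i$ under supersets gives $Z_*\in\Pp_i$.

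The potential obstacle, which the above circumvents in a single pass, is that $\bigcup_{\beta\neq\alpha_0}Z_\beta$ need not be a zero set when $S$ is infinite; pseudo-finiteness is precisely what lets us collapse an apparently infinite union to a finite one, by bounding the exceptional indices for a single fixed $Z_{\beta_0}$ and then absorbing those indices into $F$.
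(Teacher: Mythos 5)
Your proposal is correct and follows essentially the same route as the paper: the implications (c)$\Rightarrow$(a)$\Rightarrow$(b) are handled formally, and for (b)$\Rightarrow$(c) you fix $\alpha_0$, pick one witness $Z_{\beta_0}\in\Pp_{\beta_0}\setminus\Pp_{\alpha_0}$, use pseudo-finiteness to isolate the finitely many exceptional indices, and take the finite union of witnesses, excluding it from $\Pp_{\alpha_0}$ by primeness --- exactly the paper's argument. The only difference is that you write out the two ``obvious'' implications explicitly, which the paper omits.
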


\begin{proof}
Obviously, (c)$\Rightarrow$(a)$\Rightarrow$(b).

We now prove (b)$\Rightarrow$(c). Fix $\alpha\in S$. By condition
(b), $\Pp_\beta\nsubset \Pp_\alpha$ ($\beta\in
S\setminus\set{\alpha}$). Choose $Z_0\in \Pp_{\beta_0}\setminus
\Pp_\alpha$ for some $\beta_0\in S\setminus\set{\alpha}$. Then, by
the pseudo-finiteness, we have $Z_0\in \Pp_\beta$ for all but
finitely many $\beta\in S$. Let $\beta_1,\ldots, \beta_n$ be those
indices $\beta\in S\setminus\set{\alpha}$ such that $Z_0\notin
\Pp_\beta$. For each $1\le k \le n$, choose $Z_k\in
\Pp_{\beta_k}\setminus \Pp_\alpha$, and set $Z=\bigcup_{k=0}^n
Z_k$. Then $Z\in \Pp_\beta$ ($\beta\in S\setminus\set{\alpha}$),
but $Z\notin \Pp_\alpha$, by the primeness of $\Pp_\alpha$. Thus
(c) holds.
\end{proof}

The following definition and proposition are adapted from
\cite{pham2005b}.

\begin{definition}
\label{extensibility} Let $\Omega$ be a locally compact space, and
let $S$ be a non-empty index set. Let $\F$ be a $z$-filter on
$\Omega$, and let $(Z_\alpha:\ \alpha\in S)$ be a sequence of zero
sets on $\Omega$. Then $\F$ is \emph{extendible} with respect to
$(Z_\alpha:\ \alpha\in S)$ if both the following conditions hold:
    \begin{itemize}
        \item[(a)] $Z_\alpha\notin \F$, and $Z_\alpha\cup Z_\beta\in \F$ ($\alpha\neq \beta\in S$);
        \item[(b)] for each $Z\in \zero[\Omega]$, if $Z\cup Z_{\alpha_{0}}\in \F$
        for some $\alpha_0\in S$, then $Z\cup Z_\alpha\in \F$ for all except
        finitely many $\alpha\in S$.
    \end{itemize}
\end{definition}

\begin{proposition}
    \label{generalconstructionpsf}
    Let $\Omega$ be a locally compact space. Suppose that there exist a $z$-filter $\F$ and
    a family $(Z_\alpha:\ \alpha\in S)$ in $\zero[\Omega]$ such that $\F$ is
    extendible with respect to $(Z_\alpha:\ \alpha\in S)$. Then there exists a
    pseudo-finite family of prime $z$-filters $(\Pp_\alpha:\ \alpha\in S)$ such that
    $Z_\alpha\in\bigcap_{\gamma\neq \alpha} \Pp_\gamma\setminus \Pp_\alpha$ for each $\alpha\in S$.
\end{proposition}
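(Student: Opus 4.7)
The plan is to pass from the extendibility data $(\F,(Z_\alpha))$ to an auxiliary pseudo-finite family of $z$-filters, and then to refine it to prime $z$-filters while preserving the combinatorial structure. First, for each $\alpha\in S$, I would introduce
\[
    \F_\alpha = \set{Z\in\zero[\Omega]:\ Z\cup Z_\alpha\in \F},
\]
and check, using the identity $(Z\cap Z')\cup Z_\alpha=(Z\cup Z_\alpha)\cap(Z'\cup Z_\alpha)$, that $\F_\alpha$ is a $z$-filter containing $\F$. Condition (a) then gives $Z_\beta\in \F_\alpha$ for every $\beta\neq\alpha$ and $Z_\alpha\notin\F_\alpha$, while condition (b) translates verbatim into pseudo-finiteness of the family $(\F_\alpha:\ \alpha\in S)$.

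To pass to primes, I would consider the poset $\mathcal{T}$ of all indexed families $(\G_\alpha:\ \alpha\in S)$ of $z$-filters on $\Omega$ satisfying $\F_\alpha\subset \G_\alpha$, $Z_\alpha\notin \G_\alpha$, and pseudo-finiteness of $(\G_\alpha)$, ordered by componentwise inclusion. The componentwise union along any chain in $\mathcal{T}$ is again in $\mathcal{T}$, since each element of the union appears at some level of the chain, so pseudo-finiteness is preserved. Zorn's lemma therefore yields a maximal element $(\Pp_\alpha:\ \alpha\in S)$ of $\mathcal{T}$.

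The main step, and the hard part, is to verify that each $\Pp_\alpha$ is prime. If not, there are $\alpha_0\in S$ and $Z_1,Z_2\in\zero[\Omega]\setminus \Pp_{\alpha_0}$ with $Z_1\cup Z_2\in \Pp_{\alpha_0}$. Imitating the classical argument that maximal $z$-filters avoiding a given zero set are prime, one shows that at least one of $Z_1,Z_2$—say $Z_1$—can be adjoined to $\Pp_{\alpha_0}$ without pulling $Z_{\alpha_0}$ into the resulting $z$-filter (otherwise intersecting the two obstructions $W_i\cap Z_i\subset Z_{\alpha_0}$ forces $Z_{\alpha_0}\in \Pp_{\alpha_0}$). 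The natural extension $(\Pp_\alpha^*)$ would adjoin $Z_1$ to $\Pp_\alpha$ at every index where this is admissible; to contradict maximality one must check that $(\Pp_\alpha^*)$ remains pseudo-finite. The delicate point is that $Z_1$ must be admissible for all but finitely many $\alpha$: here I would invoke the original pseudo-finiteness of $(\Pp_\alpha)$ applied to $Z_1\cup Z_2$ (so $Z_1\cup Z_2\in \Pp_\alpha$ for cofinite $\alpha$) together with condition (b), in order to rule out an infinite set of obstructions of the form $W_\alpha\cap Z_1\subset Z_\alpha$ with $W_\alpha\in \Pp_\alpha$.

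Once primeness is in hand, the conclusion is immediate. For every $\beta\neq\alpha$ we have $Z_\beta\in \F_\alpha\subset \Pp_\alpha$, so $Z_\alpha\in \bigcap_{\gamma\neq\alpha}\Pp_\gamma$ for each $\alpha\in S$, while $Z_\alpha\notin \Pp_\alpha$. The family $(\Pp_\alpha:\ \alpha\in S)$ is thus the desired pseudo-finite family of prime $z$-filters.
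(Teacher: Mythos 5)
Your first step---passing to $\F_\alpha=\set{Z\in\zero[\Omega]:\ Z\cup Z_\alpha\in\F}$ and observing that condition (b) of extendibility is exactly pseudo-finiteness of $(\F_\alpha)$---is correct and coincides with the paper's. The gap is in the primeness argument, which is the heart of the proof. Having chosen a componentwise-maximal family $(\Pp_\alpha)$ by Zorn's lemma, you must show that if $Z_1\cup Z_2\in\Pp_{\alpha_0}$ with $Z_1,Z_2\notin\Pp_{\alpha_0}$, then some enlargement stays in your poset $\mathcal{T}$. Your intersection-of-obstructions argument correctly shows that at each single index at least one of $Z_1,Z_2$ is admissible (adjoining it does not pull in $Z_\alpha$), but to keep the enlarged family pseudo-finite you need one fixed $Z_i$ to be admissible at \emph{all but finitely many} indices; nothing you say excludes the scenario where $Z_1$ is admissible exactly on an infinite, co-infinite set of indices and $Z_2$ on essentially the complementary set, in which case neither enlargement is pseudo-finite. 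The tool you invoke to rule this out, condition (b), is a property of the original $\F$, whereas the obstructions $W_\beta\cap Z_1\subset Z_\beta$ involve sets $W_\beta$ lying in the much larger filters $\Pp_\beta$ produced by Zorn's lemma, so condition (b) says nothing about them. As it stands the central step is unproved, and it is not even clear that maximality in $\mathcal{T}$ implies primeness of each component.

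The paper sidesteps this by never trying to push pseudo-finiteness through a maximality argument on families. It first replaces $\F$ by a $z$-filter maximal among those extendible with respect to $(Z_\alpha)$; this maximality is used to show that $\Pp=\bigcup_{\alpha\in S}\F_\alpha$ is itself a \emph{prime} $z$-filter. Primeness of $\Pp$ makes $\D_\alpha=\set{Z_\alpha\cup Z:\ Z\in\zero[\Omega]\setminus\Pp}$ closed under finite unions, so each $\F_\alpha$ extends to a prime $z$-filter $\Pp_\alpha$ disjoint from $\D_\alpha$; that disjointness forces $\F_\alpha\subset\Pp_\alpha\subset\Pp$, and pseudo-finiteness of $(\Pp_\alpha)$ is then inherited directly from that of $(\F_\alpha)$, i.e.\ from condition (b), with no further work. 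To salvage your approach you would need to build an analogous sandwiching between $\F_\alpha$ and a global prime $z$-filter into your construction.
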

\begin{proof}
    We see that the union of a chain of $z$-filters, each of which contains $\F$
    and is extendible with respect to $(Z_\alpha)$, is also extendible with respect to
    $(Z_\alpha)$. Thus, by Zorn's lemma, we can suppose that $\F$ is a maximal one among
    those $z$-filters.

    For each $\alpha$, set
    $\F_\alpha=\set{Z\in\zero[\Omega]:\ Z\cup Z_\alpha\in\F}$, and set
    $\Pp=\bigcup_{\alpha\in S} \F_\alpha$. By
    the extensibility of $\F$, we see that whenever $Z\in \Pp$ then
    $Z\in \F_\alpha$ for all except finitely many $\alpha\in S$. Thus,
    in particular, the set $\Pp$ is actually a $z$-filter.

    \emph{Claim 1:} For each $Z_0\in \zero[\Omega]\setminus \Pp$, we have
    $\set{Z\in\zero[\Omega]:\ Z\cup Z_0\in \F}=\F$. Indeed, we see that
    $Z_\alpha\notin\G=\set{Z\in\zero[\Omega]:\ Z\cup Z_0\in \F}$ ($\alpha\in S$); for otherwise,
    $Z_0$ would be in $\Pp$.
    It then follows easily that $\G$ is extendible with
    respect to $(Z_\alpha)$. This and the maximality of $\F$ imply the claim.

    \emph{Claim 2:} $\Pp$ is a prime $z$-filter. We have to prove
    that, whenever $Z_1,Z_2\in \zero[\Omega]$ are such that $Z_1\cup Z_2\in \Pp$,
    but $Z_1\notin \Pp$, then $Z_2\in \Pp$. Indeed, let $\alpha_0\in S$ be such that
    $Z_{\alpha_0}\cup Z_1\cup Z_2\in \F$. Then $Z_{\alpha_0}\cup Z_2\in \F$, by
    the first claim, and so $Z_2\in \F_{\alpha_0}$.

    Now, for each $\alpha\in S$, define
    \[
        \D_\alpha=\set{Z_{\alpha}\cup Z:\  Z\in \zero[\Omega]\setminus \Pp}.
    \]
    Then, by Claim 2, the set $\D_\alpha$ is closed under finite union. Obviously,
     $\D_\alpha\cap \F_{\alpha}=\emptyset$. Thus,
    there exists a prime $z$-filter $\Pp_\alpha$ containing $\F_\alpha$ such that
    $\D_\alpha\cap \Pp_\alpha=\emptyset$.

    We see that $\F_\alpha\subset
    \Pp_\alpha\subset \Pp$ and $Z_\alpha\notin \Pp_\alpha$ ($\alpha\in S$).
    The result then follows.
\end{proof}

We now define a ``prototype'' space $\Xi$. Denote by $\infty$ the
point adjoined to $\naturals$ to obtain its one-point
compactification $\onepointcompactification{\naturals}$. The
product space $(\onepointcompactification{\naturals})^{\naturals}$
is a compact metrizable space. Define $\Xi$ to be the compact
subset of $(\onepointcompactification{\naturals})^{\naturals}$
consisting of all elements $(n_1,n_2,\ldots)$ with the property
that there exists $k\in\naturals$ such that $n_i\ge k$ ($1\le i\le
k$) and such that $n_i=\infty$ ($i> k$). The convention is that
$\infty>n$ ($n\in\naturals$).

\begin{lemma}
    \label{embedprototype}
    \cite[Lemma 9.2]{pham2005b} Let $\Omega$ be a locally compact metrizable space. Suppose that there
    exists a point $p\in\level{(\onepointcompactification{\Omega})}{\infty}$. Then there exists
    a homeomorphic embedding
    $\iota$ of $\Xi$ onto a closed subset of $\onepointcompactification{\Omega}$
    such that $\iota(\infty,\infty,\ldots)=
    p$.
\end{lemma}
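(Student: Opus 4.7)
The plan is to construct $\iota$ by matching the Cantor--Bendixson structure of $\Xi$---a countable compact metrizable space whose unique point of infinite CB rank is $(\infty,\infty,\ldots)$---to that of $\onepointcompactification{\Omega}$ around $p$. Fix a compatible metric on $\onepointcompactification{\Omega}$.

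\textbf{Auxiliary claim.} First I would prove by induction on $n\in\naturals$ the following: given a compact metrizable space $X$, a point $y\in\level{X}{n}$ with any prescribed open neighbourhood $U$, and a countable compact metrizable space $Y$ of CB rank at most $n$ with a unique maximum-rank point $y^*$, there is a topological embedding $\phi\colon Y\to U$ sending $y^*$ to $y$. The case $n=0$ (so $Y=\{y^*\}$) is trivial. For the step, use that $\level{Y}{n}=\{y^*\}$: by taking a shrinking clopen neighbourhood basis of $y^*$ in $Y$ and separating the finitely many maximum-rank points in each resulting annulus, one partitions $Y\setminus\{y^*\}$ as a clopen disjoint union $\bigsqcup_k Y_k$ of compact subspaces of CB rank at most $n-1$, each with unique top-rank point $y_k^*$, with $y_k^*\to y^*$. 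Since $y\in\level{X}{n}=\limitpoints(\level{X}{n-1})$, choose distinct $z_k\in\level{X}{n-1}\cap U\setminus\{y\}$ with $z_k\to y$, together with pairwise disjoint open neighbourhoods $U_k\ni z_k$ of diameter less than $1/k$ lying in $U\setminus\{y\}$. By induction, each $Y_k$ embeds into $U_k$ sending $y_k^*$ to $z_k$; gluing these with $y^*\mapsto y$ produces the required $\phi$.

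\textbf{Application to $\Xi$.} One checks directly that $\level{\Xi}{\infty}=\{(\infty,\infty,\ldots)\}$: every other element of $\Xi$ has some finite coordinate, which bounds its CB rank. The same annulus-and-separation process applied to $\Xi$ (which is zero-dimensional) yields a partition $\Xi\setminus\{(\infty,\infty,\ldots)\}=\bigsqcup_{k\in\naturals} V_k$, where each $V_k$ is compact clopen of finite CB rank $r_k$ with unique top-rank point $w_k$, and every neighbourhood of $(\infty,\infty,\ldots)$ in $\Xi$ contains all but finitely many $V_k$. Using that $p\in\level{(\onepointcompactification{\Omega})}{r_k+1}$ for every $k$, pick distinct $p_k\in\level{(\onepointcompactification{\Omega})}{r_k}\setminus\{p\}$ with $p_k\to p$ together with pairwise disjoint open neighbourhoods $U_k\ni p_k$ of diameter less than $1/k$ avoiding $p$. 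Applying the auxiliary claim to embed each $V_k$ into $U_k$ sending $w_k$ to $p_k$, and setting $\iota(\infty,\infty,\ldots)=p$, defines $\iota\colon\Xi\to\onepointcompactification{\Omega}$.

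Continuity on each clopen $V_k$ is automatic, and continuity at $(\infty,\infty,\ldots)$ follows because $\iota(V_k)\subset U_k$ and the $U_k$ shrink onto $p$; injectivity is built in from the disjointness of the $U_k$ and from $p\notin\bigcup_k U_k$. Since $\iota$ is a continuous injection from the compact $\Xi$ into the Hausdorff space $\onepointcompactification{\Omega}$, it is automatically a homeomorphism onto its closed image. The main obstacle is arranging the auxiliary induction so that the countably many rank-$(n-1)$ embeddings sit in pairwise disjoint neighbourhoods that shrink to $y$; this is a standard diagonal diameter argument, but it must be executed carefully to guarantee both continuity of the glued map at $y^*$ and injectivity across branches.
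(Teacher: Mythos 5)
The paper itself does not prove this lemma---it is quoted from \cite[Lemma 9.2]{pham2005b}---so there is no in-paper argument to compare against; your Cantor--Bendixson matching strategy is certainly the natural one, and the bulk of it is correct. The auxiliary induction is sound (countable compact metrizable spaces are zero-dimensional, so the clopen annulus decomposition around the unique top-rank point exists; the glued map is a continuous injection from a compact space into a Hausdorff one, hence a closed embedding), the identification $\level{\Xi}{\infty}=\set{(\infty,\infty,\ldots)}$ is correct, and the recursive choice of distinct $p_k\in\level{(\onepointcompactification{\Omega})}{r_k}$ accumulating only at $p$ works whenever $p$ has a countable neighbourhood base.

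The one step that genuinely fails is the very first one: ``fix a compatible metric on $\onepointcompactification{\Omega}$.'' A locally compact metrizable space need not be $\sigma$-compact, and when it is not, $\onepointcompactification{\Omega}$ is not metrizable---indeed not even first countable at the point at infinity---yet the hypothesis $\level{(\onepointcompactification{\Omega})}{\infty}\neq\emptyset$ does not exclude this (take $\Omega$ to be an uncountable disjoint union of copies of $\reals$, with $p$ the point at infinity). Your diameter-$1/k$ device, which is what forces $\iota(V_k)\to p$ and hence continuity at $(\infty,\infty,\ldots)$, then has no meaning, and one cannot simply pick $p_k\to p$ along a countable base. If $p\in\Omega$ the repair is trivial (run the whole construction inside the closure of a relatively compact open neighbourhood of $p$, which is compact metrizable). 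If $p$ is the point at infinity, you need an extra reduction: write $\Omega$ as a disjoint union of clopen $\sigma$-compact pieces and check that either a single piece $F$ already satisfies that every $\level{F}{n}$ is non-compact (then work in $F^\flat\cong F\cup\set{p}$, which is metrizable and closed in $\onepointcompactification{\Omega}$), or else the points of level $r_k$ can be chosen in pairwise distinct pieces, which forces the images $\iota(V_k)$ to escape every compact set and hence to converge to $p$. With that case added, your proof is complete.
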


A key to our construction is the result due to Sierpinski that there
exists a family $\set{E_\alpha:\ \alpha\in\continuum}$ of infinite
subsets of $\naturals$ satisfying the following properties:
\begin{enumerate}
    \item $\naturals=\bigcup_{\alpha\in\continuum} E_\alpha$, and
    \item $E_\alpha\cap E_\beta$ is finite for each $\alpha\neq \beta\in\continuum$.
\end{enumerate}
We sketch the nice construction of such family as follows (cf.
\cite{williams1977}): The set $\naturals$ is isomorphic to
\[
    C=\bigcup_{n=1}^\infty\set{f:\set{1,\ldots,n}\to\set{1,2}}.
\]
For each $f:\naturals\to\set{1,2}$, define
\[
    C_f=\set{\textrm{the restrictions of}\ f\ \textrm{to}\ \set{1,\ldots, n}:\
(n\in\naturals)}.
\]
We see that $C=\bigcup_{f:\naturals\to \set{1,2}} C_f$ and that
$C_f\cap C_g$ is finite for each $f\neq g$. We can then map back
from $C$ to $\naturals$. Inspecting the construction, we see that
$\set{E_\alpha:\ \alpha\in\continuum}$ enjoys the following
property:
\begin{itemize}
    \item[(i')] The cardinality of $\set{\alpha\in\continuum:\ n\in E_\alpha}$ is $\continuum$
        for each $n\in\naturals$.
\end{itemize}

\begin{lemma}
    \label{psfprototype}
    There exists a non-redundant pseudo-finite
    family $\set{\Q_\alpha:\ \alpha\in\continuum}$ of prime $z$-filters
    on $\Xi$ such that each $z$-filter
    is supported at the point $(\infty,\infty,\ldots)$.
\end{lemma}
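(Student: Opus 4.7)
The plan is to apply Proposition \ref{generalconstructionpsf}: we must exhibit a $z$-filter $\F$ on $\Xi$ and a family of zero sets $(Z_\alpha : \alpha \in \continuum)$, each containing $p_\infty := (\infty, \infty, \ldots)$, such that $\F$ is extendible with respect to $(Z_\alpha)$. Since every $Z_\alpha$ will contain $p_\infty$ and will be a member of $\bigcap_{\gamma \neq \alpha} \Pp_\gamma$, the prime $z$-filters produced by the proposition will automatically be supported at $p_\infty$.

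The family $(Z_\alpha)$ will be indexed via the Sierpinski family $(E_\alpha : \alpha \in \continuum)$ of almost-disjoint infinite subsets of $\naturals$. A natural candidate transports $E_\alpha$ into the first-coordinate structure of $\Xi$:
\[
    Z_\alpha := \{x \in \Xi : n_1(x) \in E_\alpha \cup \{\infty\}\},
\]
which is the closure in $\Xi$ of the clopen union $\bigcup_{n \in E_\alpha} \{x : n_1(x) = n\}$; the closure operation contributes exactly $Y_\infty := \{x \in \Xi : n_1(x) = \infty\}$, since for any $x \in Y_\infty$ one can approach $x$ by varying the first coordinate through $E_\alpha$. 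Each $Z_\alpha$ is then a zero set containing $p_\infty$, with
$Z_\alpha \cap Z_\beta = Y_\infty \cup \bigcup_{n \in E_\alpha \cap E_\beta} \{n_1 = n\}$ (a finite union of $Y_n$'s adjoined to $Y_\infty$, since $E_\alpha \cap E_\beta$ is finite) and $Z_\alpha \cup Z_\beta = Y_\infty \cup \bigcup_{n \in E_\alpha \cup E_\beta} \{n_1 = n\}$.

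For the filter, I would take $\F$ to be generated by $\{Z_\alpha \cup Z_\beta : \alpha \neq \beta\}$ under finite intersections and zero-set supersets, then apply Zorn's lemma (as in the proof of Proposition \ref{generalconstructionpsf}) to enlarge $\F$ to a maximal filter that is extendible with respect to $(Z_\alpha)$. Condition (a) is the straightforward part: the pairwise unions are in $\F$ by construction, while $Z_\alpha$ itself fails to lie in $\F$ because $E_\alpha \not\supset E_\gamma \cup E_\delta$ modulo finite for any distinct $\gamma, \delta \in \continuum$ (a consequence of almost-disjointness combined with Sierpinski's covering property).

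The main obstacle is verifying condition (b): for every zero set $Z$ with $Z \cup Z_{\alpha_0} \in \F$ for some $\alpha_0$, we need $Z \cup Z_\alpha \in \F$ for all but finitely many $\alpha$. This reduces to a combinatorial statement on the Sierpinski family---if a subset $A \subset \naturals$ derived from $Z$ satisfies $A \cap E_{\alpha_0}$ finite for some $\alpha_0$, then $A \cap E_\alpha$ must be finite for cofinitely many $\alpha$. The delicate point is that a generic zero set $Z \subset \Xi$ can, via its trace on $\bigcup_n Y_n$, produce a set $A$ that interacts badly with countably many $E_\alpha$'s simultaneously; such examples do not satisfy (b) for the naive filter. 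The fix is to exploit the self-similar structure of $\Xi$ near $p_\infty$ (namely that $Y_\infty \cong \Xi$ under coordinate shift) together with Sierpinski's property (i')---each $n \in \naturals$ lies in $\continuum$-many $E_\alpha$'s---to argue that inside a maximal extendible $\F$, the pathological $Z$'s are either pushed out of $\F$ or yield premises that fail. I expect this last step---controlling which $A$'s can arise from zero sets $Z$ with $Z \cup Z_{\alpha_0} \in \F$---to be the technical heart of the argument.
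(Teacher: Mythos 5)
Your high-level strategy --- build an extendible $z$-filter from a Sierpinski almost-disjoint family and invoke Proposition \ref{generalconstructionpsf} --- is exactly the paper's, but your concrete choice of zero sets breaks the argument, and not only at the step you flag. You encode $E_\alpha$ in the \emph{value of the first coordinate}, setting $Z_\alpha=\set{x\in\Xi:\ n_1(x)\in E_\alpha\cup\set{\infty}}$. Every $Z_\alpha$ then contains $Y_\infty=\set{x:\ n_1(x)=\infty}$, and condition (a) of extendibility already fails for \emph{any} $z$-filter containing all the pairwise unions: such a filter must contain every finite intersection $\bigcap_j\bigl(Z_{\alpha_j}\cup Z_{\beta_j}\bigr)=\set{x:\ n_1(x)\in\bigcap_j(E_{\alpha_j}\cup E_{\beta_j})\cup\set{\infty}}$, and for the tree-based Sierpinski family one can choose four pairs with $\bigcap_j(E_{\alpha_j}\cup E_{\beta_j})=\emptyset$: take, for each of the four prefixes $(1,1),(1,2),(2,1),(2,2)$, a pair of distinct functions $\naturals\to\set{1,2}$ extending that prefix; every string of length at least $2$ is absent from three of the four unions $C_{f_j}\cup C_{g_j}$, and each string of length $1$ is absent from two of them. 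Hence $Y_\infty$ itself lies in the generated $z$-filter, so every $Z_\alpha\supset Y_\infty$ does too, contradicting $Z_\alpha\notin\F$. Your check of (a) only tested containment of a single generator $Z_\gamma\cup Z_\delta$ in $Z_\alpha$, not containment of finite intersections of generators, which can collapse onto $Y_\infty$. Consequently \emph{no} $z$-filter is extendible with respect to your family, so the Zorn/maximality device you invoke for condition (b) has nothing to start from; the gap you acknowledge there is real and is not repairable without changing the $Z_\alpha$.

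The paper avoids all of this by using $E_\alpha$ to select \emph{which coordinates} are constrained rather than which values one fixed coordinate may take: it sets $N_\alpha=\set{(j_1,j_2,\ldots)\in\Xi:\ j_n=\infty\ (n\in E_\alpha)}$ and lets $\F$ be generated by the $N_\alpha\cup N_\beta$. Then any finite intersection of generators contains $\bigcap_{i=1}^m N_{\gamma_i}$ for the finitely many indices involved, and the point of $\Xi$ whose unique finite coordinate sits at a position $l\in E_\alpha\setminus\bigcup_{i=1}^m E_{\gamma_i}$ (available by almost-disjointness) lies in $\bigcap_{i=1}^m N_{\gamma_i}\setminus N_\alpha$; this gives (a). Condition (b) follows from the covering property $\naturals=\bigcup_\alpha E_\alpha$ together with a limit argument: perturbing the $l$-th coordinate of a point of $\bigcap_{i=1}^n N_{\gamma_i}$ to finite values $k+r$ produces points of $\bigcap_{i=1}^m N_{\gamma_i}\setminus N_\alpha$ converging back to it, so $N\supset\closure{\bigcap_{i=1}^m N_{\gamma_i}\setminus N_\alpha}$ forces $N\supset\bigcap_{i=1}^n N_{\gamma_i}$ and hence $N\cup N_\beta\in\F$ for all $\beta$ outside $\set{\gamma_1,\ldots,\gamma_n}$. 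The ``bad traces'' you worry about simply do not arise because the generating zero sets live on independent blocks of coordinates. If you replace your $Z_\alpha$ by these $N_\alpha$, the remainder of your outline (non-redundancy via $N_\alpha\in\bigcap_{\gamma\neq\alpha}\Pp_\gamma\setminus\Pp_\alpha$ and Lemma \ref{nonredundancy}, support at $(\infty,\infty,\ldots)$ from $\bigcap_{\alpha\ne\gamma}N_\alpha=\set{(\infty,\infty,\ldots)}$) does go through.
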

\begin{proof}
    Let $\set{E_\alpha:\ \alpha\in\continuum}$ be the family of
    infinite subsets of $\naturals$ as in the previous paragraph.
    For each $\alpha\in\continuum$, define
    \[
        N_\alpha=\set{(j_1,j_2,\ldots)\in\Xi:\ j_n=\infty\ (n\in E_\alpha)}.
    \]
    Let $\F$ to be the $z$-filter generated by all $N_\alpha\cup N_\beta$
    ($\alpha,\beta\in\continuum,\, \alpha\ne \beta$). We \emph{claim}
    that $\F$ is extendible with respect to $(N_\alpha:\ \alpha\in\continuum)$; the proof will
    then be completed by applying Lemma \ref{generalconstructionpsf}.

    Obviously, $N_\alpha\cup N_\beta\in \F$ ($\alpha\neq \beta$).
    We \emph{claim} that $N_\alpha\notin \F$ ($\alpha\in \continuum$). Indeed, assume the contrary.
    Then there exist $\gamma_1,\ldots,\gamma_m\in \continuum\setminus\set{\alpha}$
    such that $N_\alpha\supset \bigcap_{i=1}^m N_{\gamma_i}$.
    Since $E_\alpha$ is infinite whereas each $E_\alpha\cap E_{\gamma_i}$ is
    finite, there exists $l\in E_\alpha\setminus\bigcup_{i=1}^m
    E_{\gamma_i}$. We see that $(j_i) \in \bigcap_{i=1}^m
    N_{\gamma_i}\setminus N_\alpha$ where $j_l=l$ and $j_i=\infty$
    ($i\neq l$); a contradiction.

    Finally, suppose that $N\in \zero[\Xi]$ such that
    $N\cup N_{\alpha}\in \F$ for some $\alpha\in \continuum$. Then, there exist
    $\gamma_1,\ldots,\gamma_m\in\continuum\setminus \set{\alpha}$ such that
    \[
        N\cup N_\alpha\supset\bigcap_{i=1}^m N_{\gamma_i},\quad \textrm{and so}\quad
        N\,\supset\,\closure{\bigcap_{i=1}^m N_{\gamma_i}\setminus
        N_\alpha}.
    \]
    As above, there exists $l\in E_\alpha\setminus\bigcup_{i=1}^m
    E_{\gamma_i}$. We can then choose $\gamma_{m+1},\ldots,\gamma_n\in
    \continuum$ such that
    \[
        \set{1,\ldots, l}\subset \bigcup_{i=1}^n E_{\gamma_i}.
    \]
    We \emph{claim} that $N\supset \bigcap_{i=1}^n N_{\gamma_i}$.
    Indeed, let  $(j_i)\in \bigcap_{i=1}^n
    N_{\gamma_i}$. Then  $j_1=\cdots=j_l=\infty$. We see that there exists $k\ge l$
    such that $j_i\ge k$ ($1\le i\le k$) and $j_i=\infty$ ($i> k$).
    For each $r\in\naturals$, set $j^{(r)}_i=j_i$
    ($i\neq l$) and set $j^{(r)}_l=k+r$. Then, we see that
    $(j^{(r)}_i)\in \bigcap_{i=1}^m N_{\gamma_i}\setminus
    N_\alpha\subset N$ and $\lim_r (j^{(r)}_i)=(j_i)$. Thus
    $(j_i)\in N$. Hence, for each $\beta\in\continuum\setminus\set{\gamma_1,\ldots,\gamma_n}$,
    we have $N\cup N_\beta\in \F$.
\end{proof}

\begin{theorem}
    \label{constructpsfprimes}
    Let $\Omega$ be a locally compact metrizable space. Suppose that
    $p\in\level{(\onepointcompactification{\Omega})}{\infty}$. Then there exists
    a non-redundant pseudo-finite family $(\Pp_\alpha:\ \alpha\in \continuum)$ of
    prime $z$-filters on $\Omega$, each $z$-filter is supported at $p$.

    Moreover, by setting
    $P_\alpha=\C_0(\Omega)\cap\zero^{-1}[\Pp_\alpha]$,
    we obtain a non-redundant pseudo-finite family of prime
    $z$-ideals in $\C_0(\Omega)$, each ideal is supported at $p$, such that
    \[\cardinal{\C_0({\Omega})\bigg/\bigcap_{\alpha\in\continuum} P_\alpha} =\continuum.\]
\end{theorem}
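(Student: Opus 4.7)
The plan is to deduce the theorem from Lemmas~\ref{embedprototype} and~\ref{psfprototype} by transporting the prototype family from $\Xi$ to $\Omega$ through the embedding. Fix the closed homeomorphic embedding $\iota:\Xi\hookrightarrow\onepointcompactification{\Omega}$ with $\iota(\infty,\infty,\ldots)=p$, and let $(\Q_\alpha:\alpha\in\continuum)$ be the non-redundant pseudo-finite family of prime $z$-filters on $\Xi$, each supported at $(\infty,\infty,\ldots)$, supplied by Lemma~\ref{psfprototype}. For $f\in\C_0(\Omega)$ I write $\tilde f\in\C(\onepointcompactification{\Omega})$ for the extension of $f$ by zero at the point at infinity, and set $f^\flat=\tilde f\circ\iota\in\C(\Xi)$; then $f\mapsto f^\flat$ is an algebra homomorphism satisfying $\zero((fg)^\flat)=\zero(f^\flat)\cup\zero(g^\flat)$.

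For each $\alpha\in\continuum$, I would define
\[
    P_\alpha=\set{f\in\C_0(\Omega):\zero(f^\flat)\in\Q_\alpha}
\]
and $\Pp_\alpha=\set{\zero(f):f\in P_\alpha}$. A routine check using the prime $z$-filter properties of $\Q_\alpha$ shows that $P_\alpha$ is a prime $z$-ideal of $\C_0(\Omega)$, $\Pp_\alpha$ is a prime $z$-filter on $\Omega$ with $\C_0(\Omega)\cap\zero^{-1}[\Pp_\alpha]=P_\alpha$, and each is supported at $p$ (every $Z\in\Q_\alpha$ contains $(\infty,\infty,\ldots)=\iota^{-1}(p)$, so $\tilde f(p)=0$ for each $f\in P_\alpha$). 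Pseudo-finiteness transfers at once: if $f\in P_{\alpha_0}$, then $\zero(f^\flat)\in\Q_{\alpha_0}$ lies in all but finitely many $\Q_\alpha$ by the pseudo-finiteness of $(\Q_\alpha)$, whence $f\in P_\alpha$ for all but finitely many $\alpha$. For non-redundancy I would invoke criterion~(b) of Lemma~\ref{nonredundancy}: given $\alpha\neq\beta$, fix $Z\in\Q_\alpha\setminus\Q_\beta$, write $Z=\zero(g)$ with $g\in\C(\Xi)$, extend $g$ by Tietze's theorem from the closed subset $\iota(\Xi)$ of $\onepointcompactification{\Omega}$ to a continuous function vanishing at the point at infinity, and restrict to obtain $h\in\C_0(\Omega)$ with $\zero(h^\flat)=Z$; then $h\in P_\alpha\setminus P_\beta$.

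The main technical obstacle is precisely this Tietze step: ensuring the extension can be chosen to vanish at the point at infinity of $\onepointcompactification{\Omega}$ so that $h\in\C_0(\Omega)$. When $p$ is itself the point at infinity, the membership $(\infty,\infty,\ldots)\in Z$ already forces the required vanishing and Tietze applies directly; when $p\in\Omega$, one first enlarges $Z$ to $Z\cup\set{q}$, where $q$ is the preimage under $\iota$ of the point at infinity (should such a preimage exist), and verifies that this enlarged set still lies in $\Q_\alpha\setminus\Q_\beta$ via the primeness of $\Q_\beta$ and its support at $(\infty,\infty,\ldots)\neq q$. Finally, for the cardinality of the quotient, non-redundancy yields, for each $\alpha$, some $f_\alpha\in\bigcap_{\beta\neq\alpha}P_\beta\setminus P_\alpha$, and the cosets $[f_\alpha]$ in $\C_0(\Omega)/\bigcap_\alpha P_\alpha$ are pairwise distinct (else $f_\alpha-f_{\alpha'}\in P_\alpha$ forces $f_\alpha\in P_\alpha$), yielding $\ge\continuum$; conversely $\ker(f\mapsto f^\flat)\subset\bigcap_\alpha P_\alpha$ (since $f^\flat=0$ makes $\zero(f^\flat)=\Xi\in\Q_\alpha$ for every $\alpha$), so $\C_0(\Omega)/\bigcap_\alpha P_\alpha$ is a quotient of $\C_0(\Omega)/\ker$, which injects into $\C(\Xi)$; since $\Xi$ is a separable metric space, $|\C(\Xi)|\le\continuum$, giving the matching upper bound.
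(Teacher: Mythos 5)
Your overall strategy is the same as the paper's: embed $\Xi$ into $\onepointcompactification{\Omega}$ via Lemma \ref{embedprototype} and transport the family of Lemma \ref{psfprototype}. The pseudo-finiteness transfer, the non-redundancy argument via Tietze-extended witnesses (the paper instead produces the witnesses $Z_\alpha$ by Urysohn's lemma, additionally requiring $\Omega\setminus Z_\alpha$ to be $\sigma$-compact so that $Z_\alpha=\zero(f_\alpha)$ with $f_\alpha\in\C_0(\Omega)$; your Tietze route, including the enlargement $Z\cup\set{q}$ handled by primeness, also works), and the two-sided cardinality count are all sound.

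There is, however, one genuine gap: the justification of ``each $P_\alpha$ is supported at $p$.'' You argue only that $\tilde f(p)=0$ for every $f\in P_\alpha$. When $p\in\Omega$ this does identify the support point (uniqueness being automatic for prime ideals). But when $p$ is the point at infinity, $\tilde f(p)=0$ holds for \emph{every} $f\in\C_0(\Omega)$ and proves nothing: by the paper's definition, being supported at infinity means there is \emph{no} point $x\in\Omega$ at which all of $P_\alpha$ vanishes, and this must be ruled out. The paper does so by showing that for each $x\in\Omega$ there is some $Z\in\Pp_\alpha$ with $x\notin Z$, using that $\Q_\alpha$ is supported only at $(\infty,\infty,\ldots)$ and choosing the extension of the corresponding zero set to avoid $x$. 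Since the theorem is applied to spaces such as $\Delta\setminus\set{0}$, where $p$ \emph{is} the point at infinity, this case cannot be dismissed. A smaller point you wave through as ``routine'': with $\Pp_\alpha=\set{\zero(f):\ f\in P_\alpha}$ built only from $\C_0$-functions, closure under supersets inside $\zero[\Omega]$ requires the observation that any $Z'\in\zero[\Omega]$ containing the zero set of a $\C_0(\Omega)$-function has $\sigma$-compact complement and hence is itself $\zero(h)$ for some $h\in\C_0(\Omega)$; the paper sidesteps this by defining $\Pp_\alpha$ directly as $\set{Z\in\zero[\Omega]:\ (Z\cup\set{p})\cap\Xi\in\Q_\alpha}$.
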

\begin{proof}
    In this proof, we shall identify $\Xi$ with a closed subset of
    $\onepointcompactification{\Omega}$ such that $(\infty,\infty,\ldots)$ is identified with
    $p$; in the case where $p\in\Omega$, we can further assume that
    $\Xi\subset\Omega$
    (cf. Lemmas \ref{embedprototype}).

    Let $(\Q_\alpha:\
    \alpha\in\continuum)$ be the family of prime $z$-filters on
    $\Xi$ as constructed in Lemma \ref{psfprototype}. For each
    $\alpha\in\continuum$, set
    \[
        \Pp_\alpha=\set{Z\in\zero[\Omega]:\
        (Z\cup\set{p})\cap\Xi\in\Q_\alpha}.
    \]
    Note that every closed subset of $\Xi$ is in $\zero[\Xi]$, so
    we can see that each $\Pp_\alpha$ is a prime $z$-filter on
    $\Omega$. The pseudo-finiteness of $(\Pp_\alpha:\
    \alpha\in\continuum)$ and of $(P_\alpha:\ \alpha\in \continuum)$
    then follows from that of $(\Q_\alpha)$. The cardinality
    condition follows from the fact that
    $\cardinal{\C(\Xi)}=\continuum$.

    By the non-redundancy of $(\Q_\alpha:\ \alpha\in \continuum)$,
    for each $\alpha\in\continuum$, there exists
    \[
        N_\alpha\in\bigcap_{\beta\neq\alpha} \Q_\beta\setminus \Q_\alpha.
    \]
    By the Urylson's lemma, we can find $Z_\alpha\in\zero[\Omega]$ such that
    $(Z_\alpha\cup\set{p})\cap\Xi=N_\alpha$; we can even require
    $\Omega\setminus Z_\alpha$ to be $\sigma$-compact so that $Z_\alpha=\zero(f_\alpha)$
    for some $f_\alpha\in\C_0(\Omega)$. Thus we see that
    \[
        Z_\alpha\in\bigcap_{\beta\in\continuum,\, \beta\neq\alpha} \Pp_\beta\setminus \Pp_\alpha
        \quad\textrm{and}\quad
        f_\alpha\in\bigcap_{\beta\in\continuum,\, \beta\neq\alpha} P_\beta\setminus P_\alpha.
    \]

    Finally, we shall prove that each $\Pp_\alpha$ (and hence each
    $P_\alpha$) is supported at $p$ ($\alpha\in\continuum$).
    Indeed, in the case where $p$ is the point at infinity of $\Omega$, for each
    $x\in\Omega$, there exists $N\in\Q_\alpha$ such that $x\notin
    N$. We can then find $Z\in\zero[\Omega]$ such that $x\notin
    Z$ and that $(Z\cup\set{p})\cap\Xi=N$. Thus $Z\in\Pp_\alpha$
    and $x\notin Z$. So $\Pp_\alpha$ is support at infinity. On
    the other hand, in the case where $p\in\Omega$, let $Z\in\Pp_\alpha$ be arbitrary.
    Then $Z\cap\Xi$ is closed in $\Xi$, and so it is in
    $\zero[\Xi]$. Since $\set{p}\in\zero[\Xi]\setminus\Q_\alpha$,
    we deduce that $Z\cap \Xi\in\Q_\alpha$. Hence,
    $p\in Z$, and thus $\Pp_\alpha$ is supported at $p$.
\end{proof}

\begin{corollary}
    Let $p\in\onepointcompactification{\reals}$.
    There exists a family $(P_\alpha:\ \alpha\in\continuum)$
    of prime $z$-ideals in $\C_0(\reals)$ with the following properties:
    \begin{itemize}
        \item If $f\in P_{\alpha_0}$ for some $\alpha_0\in\continuum$, then
            $f\in P_\alpha$ for all but finitely many $\alpha\in
            \continuum$;
        \item $\bigcap_{\alpha\neq \alpha_0} P_\alpha \nsubset P_{\alpha_0}$ for each $\alpha_0\in
        \continuum$;
        \item each $P_\alpha$ is supported at $p$. \enproof
    \end{itemize}
\end{corollary}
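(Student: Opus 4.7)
The plan is to deduce this corollary as an immediate application of Theorem~\ref{constructpsfprimes} to $\Omega = \reals$. Since $\reals$ is locally compact and metrizable, the only hypothesis of that theorem still to be verified is the ``topological genericity'' of $p$, namely that $p \in \level{\onepointcompactification{\reals}}{\infty}$.

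Fortunately this holds for every $p \in \onepointcompactification{\reals}$. Indeed, $\onepointcompactification{\reals}$ is homeomorphic to the circle $\unitcircle$, which is a compact perfect space: no point is isolated. Hence $\limitpoints \onepointcompactification{\reals} = \onepointcompactification{\reals}$, and a trivial induction gives $\level{\onepointcompactification{\reals}}{n} = \onepointcompactification{\reals}$ for every $n \in \naturals$, so that $\level{\onepointcompactification{\reals}}{\infty} = \onepointcompactification{\reals}$. Any $p$ whatsoever then qualifies.

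Thus Theorem~\ref{constructpsfprimes} produces a non-redundant pseudo-finite family $(P_\alpha : \alpha \in \continuum)$ of prime $z$-ideals in $\C_0(\reals)$, each supported at $p$. The first bullet of the corollary is literally the definition of pseudo-finiteness applied to $(P_\alpha)$. The third bullet is provided directly by the theorem. For the second bullet, I would invoke the equivalence between non-redundancy and the apparently stronger condition (b) from the introduction (\cite[Lemma 3.4]{pham2005b}, whose $z$-filter analogue appears as Lemma~\ref{nonredundancy} in this paper): this gives precisely $\bigcap_{\alpha \neq \alpha_0} P_\alpha \nsubset P_{\alpha_0}$ for every $\alpha_0 \in \continuum$. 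There is no genuine obstacle here; all of the work has been done in Theorem~\ref{constructpsfprimes}, and the corollary is really a statement that $\onepointcompactification{\reals}$ itself is perfect.
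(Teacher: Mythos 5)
Your proposal is correct and follows exactly the route the paper intends: the corollary is stated with no separate proof precisely because it is the specialization of Theorem~\ref{constructpsfprimes} to $\Omega=\reals$, where $\onepointcompactification{\reals}\cong\unitcircle$ is perfect and hence $\level{\onepointcompactification{\reals}}{\infty}=\onepointcompactification{\reals}$, so every $p$ qualifies. Your reading of the three bullets (pseudo-finiteness, the non-redundancy condition (b), and the support point) is also the intended one; indeed the proof of Theorem~\ref{constructpsfprimes} exhibits the elements $f_\alpha\in\bigcap_{\beta\neq\alpha}P_\beta\setminus P_\alpha$ explicitly, which gives the second bullet directly.
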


There are many countable compact metrizable spaces $\Omega$ with
$\level{\Omega}{\infty}\neq \emptyset$. We note as a specific
example the following countable compact subset of $[0,1]$:
\[
    \Delta=\set{0}\cup \set{\sum_{i=1}^k 2^{-n_i}:\ k,\,n_1,n_2,\ldots,n_k\in\naturals\ \textrm{and}\ k\le
    n_1<\cdots<n_k}.
\]

\begin{corollary}
    There exists a family $(P_\alpha:\ \alpha\in\continuum)$
    of non-modular prime $z$-ideals in $\C_0(\Delta\setminus\set{0})$ with the following properties:
    \begin{itemize}
        \item If $f\in P_{\alpha_0}$ for some $\alpha_0\in\continuum$, then
            $f\in P_\alpha$ for all but finitely many $\alpha\in
            \continuum$;
        \item $\bigcap_{\alpha\neq \alpha_0} P_\alpha \nsubset P_{\alpha_0}$ for each $\alpha_0\in
        \continuum$. \enproof
    \end{itemize}
\end{corollary}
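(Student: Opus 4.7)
The plan is to apply Theorem \ref{constructpsfprimes} to $\Omega = \Delta\setminus\{0\}$, viewing $\Delta$ itself as the one-point compactification $\onepointcompactification{\Omega}$ with $0$ playing the role of the point at infinity. This identification is routine: $\Delta$ is compact as a closed bounded subset of $[0,1]$, $\{0\}$ is closed in $\Delta$, and for any $\varepsilon>0$ the set $\Delta\cap[\varepsilon,1]$ is closed in $\Delta$ and hence compact in $\Omega$, so the neighborhoods of $0$ in $\Delta$ are exactly the complements of compact subsets of $\Omega$.

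The substantive step is to verify $0 \in \level{\Delta}{\infty}$. I would proceed by a Cantor--Bendixson analysis, showing inductively that
\[
    \level{\Delta}{r} = \{0\} \cup \left\{\sum_{i=1}^k 2^{-n_i} \in \Delta : n_1 \ge k + r\right\} \qquad (r \ge 1).
\]
The base case $r=1$ is a direct check that the isolated points of $\Delta$ are exactly those with $n_1 = k$; the inductive step uses that an element $x = \sum_{i=1}^k 2^{-n_i}$ with $n_1 \ge k + r + 1$ is approached by $x + 2^{-m}$ for $m > n_k$, each of which still satisfies the level-$r$ condition, whereas the extremal elements with $n_1 = k + r$ admit no such approximators in $\level{\Delta}{r}$. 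In particular, the singleton sums $2^{-m}$ for $m \ge r+1$ all lie in $\level{\Delta}{r}$ and converge to $0$, yielding $0 \in \level{\Delta}{\infty}$.

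Theorem \ref{constructpsfprimes} then produces the desired non-redundant pseudo-finite family $(P_\alpha : \alpha \in \continuum)$ of prime $z$-ideals in $\C_0(\Omega)$, each supported at $p = 0$; the two listed bullet points follow immediately, the first from pseudo-finiteness and the second from non-redundancy via the $z$-ideal analogue of Lemma \ref{nonredundancy} (or equivalently from the separating elements $f_\alpha$ constructed explicitly in the proof of the theorem). For non-modularity, observe that since $0$ is the point at infinity of $\Omega$, no $P_\alpha$ is contained in any $M_q$ with $q \in \Omega$; since the modular maximal ideals of $\C_0(\Omega)$ are precisely these point-evaluation kernels, and every modular ideal in a commutative Banach algebra lies inside some modular maximal ideal, each $P_\alpha$ must be non-modular. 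The main technical hurdle is the Cantor--Bendixson computation, but it is routine once the recursive structure of $\Delta$ is spotted.
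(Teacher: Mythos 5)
Your proposal is correct and follows the same route the paper intends: the corollary is stated as an immediate consequence of Theorem \ref{constructpsfprimes} applied to $\Omega=\Delta\setminus\set{0}$ with $p=0$ the point at infinity, and your Cantor--Bendixson computation showing $0\in\level{\Delta}{\infty}$ together with the observation that an ideal supported at infinity cannot lie in any modular maximal ideal $M_q$ ($q\in\Omega$) supplies exactly the details the paper leaves implicit. No gaps.
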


\section{Well-ordered decreasing chains of prime ideals and prime $z$-filters}
\label{decreasingsection}

Let $\Omega$ be a metrizable locally compact space. If
$\level{\onepointcompactification{\Omega}}{n}=\emptyset$ for some
$n\in\naturals$, then it can be seen that every chain of prime
$z$-ideals in $\C_0(\Omega)$ or prime $z$-filters on $\Omega$ has
length at most $n$. Hence, in this section we shall suppose that
$\level{\onepointcompactification{\Omega}}{\infty}\neq\emptyset$.

In the following, $\Xi$ is the compact subset of
$(\onepointcompactification{\naturals})^\naturals$ defined in the
previous section. Also, our convention is that $\max \emptyset$ is
smaller and $\min \emptyset$ is bigger than everything, and that
$\bigcap_{\alpha\in\emptyset} N_\alpha$ is the whole space (i.e.
$\Xi$ in the next lemma) and $\bigcup_{\alpha\in\emptyset}
N_\alpha=\emptyset$.

\begin{lemma}
    \label{almost_disjoint_continuum}
    There exists a family $(N_\alpha)_{\alpha\in\continuum}$ of zero
    sets on
    $\Xi$ satisfying that, for every $\gamma\in\continuum$ and disjoint finite subsets
        $F$ and $G$ of $\continuum$, we can find a finite subset $H$ of
        $\continuum$
        with the properties that $\gamma\le\min H$ and that
        \[
            \closure{\bigcap_{\beta\in G}N_\beta\setminus\Big(\bigcup_{\alpha\in F} N_\alpha\Big)}
            \supset \bigcap_{\beta\in G\cup H}N_\beta.
        \]
\end{lemma}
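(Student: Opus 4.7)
The plan is to use the same zero sets that appeared in Lemma \ref{psfprototype}. Fix the Sierpi\'nski family $\set{E_\alpha:\ \alpha\in\continuum}$ satisfying properties (1), (2), and (i') from Section \ref{psedofinitesection}, and set
\[
    N_\alpha=\set{(j_1,j_2,\ldots)\in\Xi:\ j_n=\infty\ \textrm{for all}\ n\in E_\alpha}\quad(\alpha\in\continuum).
\]
Each $N_\alpha$ is closed in the metrizable space $\Xi$, hence a zero set. The work, then, is to produce, for each triple $(\gamma,F,G)$, an appropriate finite set $H\subset\continuum$.

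First, for every $\alpha\in F$ I would select
\[
    l_\alpha\in E_\alpha\setminus\bigcup_{\beta\in G\cup(F\setminus\set{\alpha})} E_\beta;
\]
by almost-disjointness the removed portion of $E_\alpha$ is finite, so such $l_\alpha$ exist, and they are automatically distinct. Set $L=\max_{\alpha\in F} l_\alpha$ (with $L=0$ when $F=\emptyset$). For each $n\in\set{1,\ldots,L}$ not already in $\bigcup_{\beta\in G} E_\beta$, property (i') furnishes continuum many indices $h$ with $n\in E_h$, so one can choose $h_n$ with $h_n\ge\gamma$, $h_n\notin F\cup G$, and all the chosen $h_n$ distinct. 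Let $H$ be the finite set of these $h_n$; then $\min H\ge\gamma$ and $\set{1,\ldots,L}\subset\bigcup_{\beta\in G\cup H} E_\beta$.

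To verify the containment, take $(j_i)\in\bigcap_{\beta\in G\cup H}N_\beta$. The covering property forces $j_1=\cdots=j_L=\infty$, and then, exactly as in the closing paragraph of the proof of Lemma \ref{psfprototype}, one can fix $k\ge L$ with $j_i\ge k$ for $i\le k$ and $j_i=\infty$ for $i>k$. For each $r\in\naturals$ define $\tuple{y}^{(r)}=(j_i^{(r)})$ by setting $j_{l_\alpha}^{(r)}=k+r$ for every $\alpha\in F$ and leaving the other coordinates unchanged. The same $k$ witnesses $\tuple{y}^{(r)}\in\Xi$, and $\tuple{y}^{(r)}\to(j_i)$ coordinatewise as $r\to\infty$. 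Because no $l_\alpha$ lies in any $E_\beta$ with $\beta\in G$, each $\tuple{y}^{(r)}$ remains in every $N_\beta$ ($\beta\in G$); because $l_\alpha\in E_\alpha$ while $j_{l_\alpha}^{(r)}\ne\infty$, each $\tuple{y}^{(r)}$ lies outside every $N_\alpha$ ($\alpha\in F$). Hence $(j_i)\in\closure{\bigcap_{\beta\in G}N_\beta\setminus\bigcup_{\alpha\in F}N_\alpha}$, as required.

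The only real obstacle is the rigid definition of $\Xi$: changing a coordinate with index $l$ from $\infty$ to a finite value keeps the sequence in $\Xi$ only if some admissible ``switchover'' $k\ge l$ still exists after the change, and this can be secured only when the original sequence already has $j_i=\infty$ for every $i\le l$. That is precisely why $G\cup H$ must together cover the initial segment $\set{1,\ldots,L}$; property (i') of the Sierpi\'nski family gives enough freedom to arrange this coverage while simultaneously keeping $\min H\ge\gamma$.
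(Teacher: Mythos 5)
Your proposal is correct and follows essentially the same route as the paper: the same Sierpi\'nski family, the same sets $N_\alpha$, and the same density argument of perturbing finitely many coordinates $l_\alpha\in E_\alpha$ to finite values $k+r$ while using property (i') to cover the initial segment $\set{1,\ldots,L}$ with indices at least $\gamma$. The only (harmless, arguably cleaner) difference is that you treat a general finite $F$ in one step by choosing distinct coordinates $l_\alpha$ for all $\alpha\in F$ simultaneously, whereas the paper proves the case $\cardinal{F}=1$ and appeals to induction for the general case.
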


\begin{proof}
Recall from the previous section that there exists a family
$(E_\alpha:\ \alpha\in\kappa)$ of infinite subsets of $\naturals$
satisfying:
\begin{itemize}
    \item[(a)] $E_\alpha\cap E_\beta$ is finite for each $\alpha\neq \beta\in\kappa$, and
    \item[(b)] the cardinality of $\set{\alpha\in\continuum:\ n\in E_\alpha}$ is $\continuum$
    ($n\in\naturals$).
\end{itemize}
Similar to Lemma \ref{psfprototype}, we define, for each
$\alpha\in\continuum$,
\[
        N_\alpha=\set{(j_1,j_2,\ldots)\in\Xi:\ j_n=\infty\ (n\in E_\alpha)}.
\]

Let $G$ be a finite subset of $\kappa$ and let
$\alpha,\gamma\in\continuum$. Then, there exists $l\in
E_\alpha\setminus\bigcup_{\beta\in G} E_{\beta}$. Since the
cardinality of $\set{\beta:\ \beta<\gamma}$ is less than
$\continuum$, by (b) above, we can find a finite subset $H$ of
$\continuum$ such that $\gamma\le\min H$ and that
\[
    \set{1,\ldots, l}\subset\bigcup_{\beta\in G\cup H} E_\beta.
\]
Then, similar to Lemma \ref{psfprototype}, we see that
\[
        \closure{\bigcap_{\beta\in G}N_\beta\setminus N_\alpha}
        \supset\bigcap_{\beta\in G\cup H}N_\beta.
\]
The general case follows by induction.
\end{proof}

\begin{theorem}
    \label{decreasing_chain_continuum}
    Let $\Omega$ be a metrizable locally compact space, and
    let $p\in\level{\onepointcompactification{\Omega}}{\infty}$.
    Then there exists a well-ordered decreasing
    chain $(\Q_\alpha:\ \alpha\in \continuum)$ of prime $z$-filters on
    $\Omega$ each supported at $p$.

    Furthermore, by setting $Q_\alpha=\C_0(\Omega)\cap\zero^{-1}[\Q_\alpha]$,
    we obtain a well-ordered decreasing chain $(Q_\alpha:\ \alpha\in \continuum)$ of prime
    $z$-ideals in $\C_0(\Omega)$ each supported at $p$.
\end{theorem}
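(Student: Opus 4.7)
The plan is to build the chain on the prototype space $\Xi$ first, using the family $(N_\alpha)_{\alpha\in\continuum}$ of Lemma \ref{almost_disjoint_continuum}, and then to pull it back to $\Omega$ via the device $\Pp^\Xi\mapsto\set{Z\in\zero[\Omega]:\ (Z\cup\set{p})\cap\Xi\in\Pp^\Xi}$ from the proof of Theorem \ref{constructpsfprimes}; by Lemma \ref{embedprototype}, I may identify $\Xi$ with a closed subset of $\onepointcompactification{\Omega}$ so that $p$ corresponds to $(\infty,\infty,\ldots)$. For each $\alpha\in\continuum$, let $\F_\alpha$ denote the $z$-filter on $\Xi$ generated by $\set{N_\beta:\ \beta\ge\alpha}$. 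Since $p\in N_\beta$ for every $\beta$, each $\F_\alpha$ is proper, and evidently $\F_\alpha\supseteq\F_{\alpha'}$ whenever $\alpha\le\alpha'$. Applying Lemma \ref{almost_disjoint_continuum} with any finite $G'\subset(\gamma,\continuum)$ and $F=\set{\gamma}$ shows that the closure of $\bigcap_{\beta\in G'}N_\beta\setminus N_\gamma$ is non-empty; hence no finite intersection of the $N_\beta$ with $\beta>\gamma$ is contained in $N_\gamma$, so that $N_\gamma\in\F_\gamma\setminus\F_{\gamma+1}$.

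I then construct prime $z$-filters $\Q_\alpha^\Xi\supseteq\F_\alpha$ on $\Xi$ by transfinite recursion. Take $\Q_0^\Xi$ to be any prime extension of $\F_0$. At a limit stage $\alpha$, set $\Q_\alpha^\Xi=\bigcap_{\gamma<\alpha}\Q_\gamma^\Xi$; this is again a prime $z$-filter, by a direct check that the intersection of a decreasing chain of prime $z$-filters is prime, and the inclusion $\F_\alpha\subseteq\Q_\alpha^\Xi$ is automatic because each generator $N_\beta$ with $\beta\ge\alpha$ already lies in every $\Q_\gamma^\Xi$ with $\gamma<\alpha$. At a successor stage $\alpha=\gamma+1$, apply Zorn's lemma to the collection of $z$-filters $\Q'$ satisfying $\F_{\gamma+1}\subseteq\Q'\subseteq\Q_\gamma^\Xi$ and $N_\gamma\notin\Q'$; this collection is non-empty (it contains $\F_{\gamma+1}$, since $N_\gamma\notin\F_{\gamma+1}$) and is closed under unions of chains, so a maximal element $\Q_{\gamma+1}^\Xi$ exists. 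By construction $N_\gamma\in\Q_\gamma^\Xi\setminus\Q_{\gamma+1}^\Xi$, and $\Q_\alpha^\Xi\subseteq\Q_{\gamma+1}^\Xi\subsetneq\Q_\gamma^\Xi$ whenever $\gamma<\alpha$ at limits, so the resulting chain is strictly decreasing; each $\Q_\alpha^\Xi$ is supported at $p$ because $p$ lies in every element of $\F_0\subseteq\Q_\alpha^\Xi$.

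The main obstacle is to show that the Zorn-maximal $\Q_{\gamma+1}^\Xi$ is in fact prime; this is what keeps primeness propagating through the recursion. The key point is that primeness of $\Q_\gamma^\Xi$ is available: if $Z_1\cup Z_2\in\Q_{\gamma+1}^\Xi$ but neither $Z_i$ lies in $\Q_{\gamma+1}^\Xi$, then one of them, say $Z_1$, must lie in $\Q_\gamma^\Xi$; the smallest $z$-filter containing $\Q_{\gamma+1}^\Xi\cup\set{Z_1}$ is still contained in $\Q_\gamma^\Xi$, so maximality of $\Q_{\gamma+1}^\Xi$ forces it to contain $N_\gamma$, giving $W\cap Z_1\subset N_\gamma$ for some $W\in\Q_{\gamma+1}^\Xi$; the same reasoning applied to $Z_2$ and an intersection of witnesses then forces $N_\gamma$ itself into $\Q_{\gamma+1}^\Xi$, a contradiction. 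Once the chain on $\Xi$ is in hand, transferring via the formula above produces the required chain $(\Q_\alpha:\ \alpha\in\continuum)$ of prime $z$-filters supported at $p$; strict decrease of the corresponding prime $z$-ideals $Q_\alpha=\C_0(\Omega)\cap\zero^{-1}[\Q_\alpha]$ then follows because Urysohn's lemma supplies, for each $\alpha$, a function $f_\alpha\in\C_0(\Omega)$ whose zero set $Z_\alpha$ has $\sigma$-compact complement and satisfies $(Z_\alpha\cup\set{p})\cap\Xi=N_\alpha$, separating $Q_\alpha$ from $Q_{\alpha+1}$.
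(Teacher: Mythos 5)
Your architecture is essentially the paper's own (the almost-disjoint family of Lemma \ref{almost_disjoint_continuum}, the decreasing filtration $\F_\alpha$ generated by the tails $\set{N_\beta:\ \beta\ge\alpha}$, intersections at limit stages), so everything hinges on the successor step, and that is where there is a genuine gap. You take $\Q_{\gamma+1}^\Xi$ maximal among $z$-filters $\Q'$ with $\F_{\gamma+1}\subseteq\Q'\subseteq\Q_\gamma^\Xi$ and $N_\gamma\notin\Q'$, and claim maximality forces primeness. But your verification tacitly assumes that \emph{both} $Z_1$ and $Z_2$ lie in $\Q_\gamma^\Xi$: primeness of $\Q_\gamma^\Xi$ only gives you this for one of them, say $Z_1$, and ``the same reasoning applied to $Z_2$'' requires the $z$-filter generated by $\Q_{\gamma+1}^\Xi\cup\set{Z_2}$ to be contained in $\Q_\gamma^\Xi$, which fails exactly when $Z_2\notin\Q_\gamma^\Xi$. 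In that remaining case one only gets $W\cap Z_1\subset N_\gamma$ for some $W\in\Q_{\gamma+1}^\Xi$, hence $N_\gamma\cup Z_2=W\cap(Z_1\cup Z_2)\cup\cdots\supset W\cap(Z_1\cup Z_2)$, so $N_\gamma\cup Z_2\in\Q_{\gamma+1}^\Xi$ while $N_\gamma, Z_2\notin\Q_{\gamma+1}^\Xi$ --- which is precisely the failure of primeness you are trying to exclude, and the argument circles. The structural problem is that maximality subject to avoiding the \emph{single} set $N_\gamma$, with ``$\subseteq\Q_\gamma^\Xi$'' imposed as a side condition, is not the right notion. The paper instead forbids the family $\D=\set{Z\cup N_\gamma:\ Z\in\zero[\Xi]\setminus\Q_\gamma^\Xi}$ (in your on-$\Xi$ formulation), which is closed under finite unions \emph{because} $\Q_\gamma^\Xi$ is prime and which encodes both requirements at once ($Z=\emptyset$ gives $N_\gamma\notin\Q_{\gamma+1}^\Xi$, and any $Y\in\Q_{\gamma+1}^\Xi\setminus\Q_\gamma^\Xi$ would force $Y\cup N_\gamma\in\Q_{\gamma+1}^\Xi\cap\D$); a $z$-filter maximal with respect to missing a union-closed family is prime by the standard two-witness argument. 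The price is checking $\F_{\gamma+1}\cap\D=\emptyset$, and \emph{that} is where Lemma \ref{almost_disjoint_continuum} is really needed at successor stages; your proposal invokes the lemma only to show $N_\gamma\notin\F_{\gamma+1}$, which is necessary but much weaker.

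Two smaller points. First, ``$\Q_0^\Xi$ any prime extension of $\F_0$'' is not good enough: if $\set{p}\in\Q_0^\Xi$ (e.g.\ the principal $z$-ultrafilter at $p$, which does contain $\F_0$), then your transfer $\set{Z\in\zero[\Omega]:\ (Z\cup\set{p})\cap\Xi\in\Q_0^\Xi}$ is all of $\zero[\Omega]$, since $(Z\cup\set{p})\cap\Xi$ always contains $p$; you must choose $\Q_0^\Xi$ avoiding $\set{p}$ (this is the role of the paper's $\D_*$), and this is also what the supported-at-$p$ claim and the properness of the pulled-back filters rest on. Second, ``supported at $p$ because $p$ lies in every element of $\F_0$'' is not a proof: support concerns every element of $\Q_\alpha$, and the correct justification uses $\bigcap_{\beta\ge\alpha}N_\beta=\set{p}$ together with $\set{p}\notin\Q_\alpha^\Xi$.
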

\begin{proof}
Similar to Theorem \ref{constructpsfprimes}, we shall identify
$\Xi$ with a closed subset of $\onepointcompactification{\Omega}$
such that $(\infty,\infty,\ldots)$ is identified with $p$; in the
case where $p\in\Omega$, we can further assume that
$\Xi\subset\Omega$ (cf. Lemmas \ref{embedprototype}).

Let $(N_\alpha:\ \alpha\in\continuum)$ be the family of zero sets
on $\Xi$ as constructed in Lemma \ref{almost_disjoint_continuum}.
For each $\alpha\in\continuum$, choose $Z_\alpha=\zero(f_\alpha)$
for some $f_\alpha\in\C_0(\Omega)$ such that
$(Z_\alpha\cup\set{p})\cap \Xi=N_\alpha$. Also, define
\[
    \F_\alpha=\set{Z\in\zero[\Omega]:\ Z\cup\set{p}\supset \bigcap_{i=1}^n
    N_{\beta_i}\ \textrm{for some}\ \alpha\le\beta_1,\ldots,\beta_n\in\continuum}.
\]
Then $(\F_\alpha)$ is a decreasing $\continuum$-sequence of
$z$-filters on $\Omega$; $Z_\alpha\in\F_\alpha$ but
$Z_\alpha\notin\F_\beta$ ($\alpha<\beta\in\continuum$).

Set
\[
    \D_*=\set{Z\in\zero[\Omega]: \ (Z\cup\set{p})\cap \Xi=\set{p}}.
\]
Then $\D_*$ is closed under taking finite union. Also, since
$\D_*\cap \F_0=\emptyset$, there exists a prime $z$-filter $\Q_0$
on $\Omega$ containing $\F_0$ such that $\Q_0\cap \D_*=\emptyset$.
Let $\gamma\in\continuum$. Suppose that we have already
constructed a well-ordered decreasing chain $(\Q_\alpha:\
\alpha<\gamma)$ of prime $z$-filters on $\Omega$ such that
$\F_\alpha\subset \Q_\alpha$ ($\alpha<\gamma$). If $\gamma$ is a
limit ordinal, set $\Q_\gamma=\bigcap_{\alpha<\gamma}\Q_\alpha$.
Consider now the case where $\gamma=\alpha+1$ for some $\alpha$.
Set
\[
    \D_\gamma=\set{Z\cup Z_\alpha:\ Z\in\zero[\Omega]\setminus\Q_\alpha}.
\]
Then $\D_\gamma$ is closed under taking finite union. Also, we
have $\F_\gamma\cap \D_\gamma=\emptyset$; since otherwise, there
exist $Z\in\zero[\Omega]\setminus\Q_\alpha$ and a finite subset
$G$ of $\continuum$ such that $\gamma\le \min G$ and that
\[
    Z\cup Z_\alpha\cup\set{p}\supset\bigcap_{\beta\in G}
    N_{\beta}\quad\textrm{which implies that}\quad
    Z\cup\set{p}\supset\closure{\bigcap_{\beta\in G}
    N_{\beta}\setminus N_\alpha}\supset\bigcap_{\beta\in H}
    N_{\beta}
\]
for some finite subset $H$ of $\continuum$ with $\gamma\le \min
H$, by Lemma \ref{almost_disjoint_continuum}, or
$Z\in\F_\gamma\subset \Q_\alpha$ a contradiction. Therefore, there
exists a prime $z$-filter $\Q_\gamma$ such that $\F_\gamma\subset
\Q_\gamma$ and $\Q_\gamma\cap\D_\gamma=\emptyset$. We see that, in
this case, $\Q_\gamma\subsetneq \Q_\alpha$ and $Z_\alpha\notin
\Q_\gamma$. Thus, in both cases, the construction can be continued
inductively.

Setting $Q_\alpha=\C_0(\Omega)\cap\zero^{-1}[\Q_\alpha]$. Then
$f_*\notin Q_0$ for $f_*\in\C_0(\Omega)$ such that
$\zero(f)\in\D_*$, and, for each $\gamma=\alpha+1\in\continuum$,
we have $f_\alpha\in Q_\alpha\setminus Q_\gamma$. It follows that
the chain $(Q_\alpha:\ \alpha\in \continuum)$ is decreasing.

The statement on support point follows from the fact that
$\bigcap_{\alpha\in\continuum} N_\alpha=\set{p}$.
\end{proof}

It was proved in \cite[Theorem 13.2]{mandelker1968} that starting
from any non-minimal prime $z$-filter containing a countable zero
set on $\reals$ there exists a well-ordered decreasing full
$\omega_1$-sequence of prime $z$-filters such that each prime
$z$-filter contains a countable zero set. However, besides that
$\omega_1<\continuum$ in the absence of the Continuum Hypothesis,
the union of those countable zero sets are not countable, and thus
that $\omega_1$-sequence says nothing about uncountable chains of
prime $z$-filters on countable spaces.

\begin{corollary}
    Let $\Delta$ be any countable compact subset of $\reals$ such that
    $\level{\Delta}{\infty}\neq\emptyset$. There exists a well-ordered decreasing chain of order type $\continuum$ of prime
    $z$-filters on $\reals$ such that each prime $z$-filter
    contains $\Delta$. \enproof
\end{corollary}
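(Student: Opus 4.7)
The plan is to reduce this to an application of Theorem \ref{decreasing_chain_continuum} on $\Delta$ itself, and then lift the resulting chain from $\Delta$ back up to $\reals$ via the obvious restriction/extension correspondence between zero sets. First I would pick any $p\in\level{\Delta}{\infty}$, which is nonempty by hypothesis. Since $\Delta$ is already compact, its one-point compactification merely adjoins an isolated point, so $p$ also lies in $\level{\onepointcompactification{\Delta}}{\infty}$. Applying Theorem \ref{decreasing_chain_continuum} with $\Omega=\Delta$ then yields a well-ordered decreasing chain $(\Q_\alpha:\ \alpha\in\continuum)$ of prime $z$-filters on $\Delta$.

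Next I would lift each $\Q_\alpha$ to $\reals$ by setting
\[
    \Pp_\alpha=\set{Z\in\zero[\reals]:\ Z\cap\Delta\in\Q_\alpha}.
\]
The key observation that makes this lift behave is that, since both $\reals$ and $\Delta$ are metrizable, closed sets coincide with zero sets in each: $Z\cap\Delta$ lies in $\zero[\Delta]$ for every $Z\in\zero[\reals]$, and conversely every $W\in\zero[\Delta]$ is closed in $\reals$, hence in $\zero[\reals]$, with $W\cap\Delta=W$. From this, checking that each $\Pp_\alpha$ is a prime $z$-filter on $\reals$ reduces to a routine transfer of properties: primeness follows from that of $\Q_\alpha$ via $(Z_1\cup Z_2)\cap\Delta=(Z_1\cap\Delta)\cup(Z_2\cap\Delta)$, and $\Delta\in\Pp_\alpha$ is automatic since $\Delta$ is the whole space of $\Delta$ and so belongs to every $\Q_\alpha$.

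Strict monotonicity is then immediate: $\Q_\beta\subset\Q_\alpha$ trivially gives $\Pp_\beta\subset\Pp_\alpha$, and if $\alpha<\beta$ then any $W\in\Q_\alpha\setminus\Q_\beta$, viewed as an element of $\zero[\reals]$ via the second half of the observation above, witnesses $\Pp_\alpha\supsetneq\Pp_\beta$. The only mildly delicate point throughout is the double use of metrizability for the zero-set/closed-set identification on $\reals$ and on $\Delta$; everything else is bookkeeping once the chain on $\Delta$ is in hand.
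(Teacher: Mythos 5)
Your argument is correct, but it reaches the conclusion by a different mechanism than the paper. The paper treats this corollary as immediate from Theorem \ref{decreasing_chain_continuum} applied to $\Omega=\reals$, with the containment of $\Delta$ extracted from the \emph{proof} of that theorem: since $\level{\Delta}{\infty}\neq\emptyset$, Lemma \ref{embedprototype} lets one place the prototype $\Xi$ inside $\Delta$ itself, and then every filter $\F_\alpha\subset\Q_\alpha$ in the construction contains $\Xi$, hence its superset $\Delta$. You instead use the theorem as a black box on $\Omega=\Delta$ and transfer the resulting chain to $\reals$ via $\Pp_\alpha=\set{Z\in\zero[\reals]:\ Z\cap\Delta\in\Q_\alpha}$. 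Your transfer is sound: since $\Delta$ is a compact metric space, every closed subset of $\Delta$ lies in $\zero[\Delta]$ and is also a closed (hence zero) set in $\reals$ fixed by $Z\mapsto Z\cap\Delta$, which gives properness, primeness, $\Delta\in\Pp_\alpha$, and the strictness of the inclusions from a witness $W\in\Q_\alpha\setminus\Q_\beta$. What your route buys is independence from the internals of the theorem's proof --- the corollary genuinely does not follow from the bare statement of the theorem applied to $\reals$, so your reduction is arguably the more honest justification of the \qedsymbol; what the paper's route buys is brevity and the extra information that the chain can be taken supported at any prescribed $p\in\level{\Delta}{\infty}$ while living on all of $\reals$ from the start.
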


We now look for longer chains. We shall need to restrict to
uncountable locally compact Polish spaces. Note that for any
well-ordered decreasing chain of order type $\kappa$ of prime
$z$-filters on $\Omega$ or prime ideals in $\C_0(\Omega)$, where
$\Omega$ is in addition $\sigma$-compact, $\kappa$ must have
cardinality at most $\continuum$.

\begin{lemma}
    \label{almost_disjoint}
    Let $\kappa$ be an ordinal of cardinality $\continuum$.
    There exists a family $(N_\alpha)_{\alpha\in\kappa}$ of zero
    sets on
    $(\onepointcompactification{\naturals})^\naturals$ such that
    for every disjoint finite subsets $F$ and $G$ of $\kappa$, we have
    \[
        \closure{\bigcap_{\alpha\in F}N_\alpha\setminus\Big(\bigcup_{\beta\in G} N_\beta\Big)}
        =\bigcap_{\alpha\in F}N_\alpha.
    \]
\end{lemma}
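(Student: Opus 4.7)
The plan is to recycle the construction from Lemma~\ref{almost_disjoint_continuum}, working with a Sierpi\'nski almost disjoint family, but this time in the ambient product $(\onepointcompactification{\naturals})^\naturals$ rather than in the smaller subspace~$\Xi$. The extra freedom of dropping the tail-$\infty$ constraint that defines~$\Xi$ is what should upgrade the one-sided containment from Lemma~\ref{almost_disjoint_continuum} to the equality required here.

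Concretely, fix a Sierpi\'nski almost disjoint family $(E_\alpha)_{\alpha\in\kappa}$ of infinite subsets of $\naturals$ (re-indexing the family from \S\ref{psedofinitesection} using $|\kappa|=\continuum$), and set
\[
    N_\alpha=\set{(j_n)\in(\onepointcompactification{\naturals})^\naturals:\ j_n=\infty\ (n\in E_\alpha)}\qquad(\alpha\in\kappa),
\]
which is closed, hence a zero set in the compact metrizable product.

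The inclusion $\closure{\bigcap_{\alpha\in F}N_\alpha\setminus \bigcup_{\beta\in G}N_\beta}\subseteq\bigcap_{\alpha\in F}N_\alpha$ is immediate. For the reverse, given $(j_n)\in\bigcap_{\alpha\in F}N_\alpha$ and a basic product neighborhood $U$ of $(j_n)$ determined by a finite coordinate set $S\subset\naturals$ and a tolerance threshold $M$ (for coordinates where $j_n=\infty$), I would enumerate $G=\set{\beta_1,\ldots,\beta_k}$ and pick, one by one, distinct indices
\[
    m_{\beta_i}\in E_{\beta_i}\setminus\Big(\bigcup_{\alpha\in F}E_\alpha\cup S\cup\set{m_{\beta_1},\ldots,m_{\beta_{i-1}}}\Big)
\]
with $m_{\beta_i}>M$. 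Almost disjointness makes each $E_{\beta_i}\cap\bigcup_{\alpha\in F}E_\alpha$ finite, so the candidate set at each step is cofinite in the infinite set $E_{\beta_i}$, and the selection goes through. I would then define $(j'_n)$ by $j'_n=j_n$ for $n\in S$, $j'_{m_{\beta_i}}$ equal to any natural number, and $j'_n=\infty$ for every other $n$.

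The verification is then routine: $(j'_n)\in U$ by agreement on~$S$; $(j'_n)\in N_\alpha$ for each $\alpha\in F$ because every $n\in E_\alpha$ either lies in~$S$ (where $j_n=\infty$ forces $j'_n=\infty$) or lies outside $S\cup\set{m_{\beta_1},\ldots,m_{\beta_k}}$ (using that each $m_{\beta_i}\notin\bigcup_{\alpha\in F}E_\alpha$) and so $j'_n=\infty$ by construction; and $(j'_n)\notin N_{\beta_i}$ since $m_{\beta_i}\in E_{\beta_i}$ while $j'_{m_{\beta_i}}\in\naturals$. The only thing that requires any care is the bookkeeping used to secure the $m_{\beta_i}$'s simultaneously distinct, outside $S$, and above~$M$; but since each candidate set is infinite, these finitely many constraints pose no obstruction, and no new combinatorial input beyond the almost disjoint family from \S\ref{psedofinitesection} is needed.
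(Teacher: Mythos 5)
Your proof is correct and matches the paper's intended argument: the paper's proof of this lemma is just the remark that it is ``similar to (but simpler than)'' Lemma~\ref{almost_disjoint_continuum}, i.e.\ the same Sierpi\'nski family and the same sets $N_\alpha$ transplanted to the full product $(\onepointcompactification{\naturals})^\naturals$, where dropping the staircase constraint defining $\Xi$ removes the need for the auxiliary finite set $H$ and upgrades the containment to equality. Your density argument --- choosing one coordinate in each $E_\beta$ ($\beta\in G$) outside $\bigcup_{\alpha\in F}E_\alpha$ and outside the finitely many constrained coordinates, and perturbing it to a finite value --- is exactly the fleshed-out version of that remark.
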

\begin{proof}
    Similar to (but simpler than) that of Lemma \ref{almost_disjoint_continuum}.
\end{proof}

\begin{theorem}
    Let $\Omega$ be an uncountable locally compact Polish space. Let $\kappa$ be
    an ordinal of cardinality $\continuum$. Then there exists a well-ordered decreasing
    chain $(\Q_\alpha:\ \alpha\in \kappa)$ of prime $z$-filters on
    $\Omega$.

    Furthermore, by setting $Q_\alpha=\C_0(\Omega)\cap\zero^{-1}[\Q_\alpha]$,
    we obtain a well-ordered decreasing chain $(Q_\alpha:\ \alpha\in \kappa)$ of prime
    $z$-ideals in $\C_0(\Omega)$.
\end{theorem}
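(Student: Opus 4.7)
The plan is to mirror the proof of Theorem~\ref{decreasing_chain_continuum}, but using Lemma~\ref{almost_disjoint} in place of Lemma~\ref{almost_disjoint_continuum} and replacing the subspace $\Xi$ by the full Cantor space $\Lambda=(\onepointcompactification{\naturals})^{\naturals}$, which is compact metrizable perfect and zero-dimensional. The hypothesis that $\Omega$ is uncountable locally compact Polish will be used only to realise $\Lambda$ as a compact (hence closed) subspace of $\Omega$: by Cantor--Bendixson the perfect kernel of $\Omega$ is nonempty and Polish, and inside it one finds a compact perfect zero-dimensional subset, which is necessarily a Cantor set.

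Fixing such an embedding $\Lambda\subset\Omega$, I would then use Tietze extension together with a compactly supported cut-off $\phi\in\C_0(\Omega)$ equal to $1$ on $\Lambda$ to produce, for each $\alpha\in\kappa$, a function $f_\alpha\in\C_0(\Omega)$ whose zero set $Z_\alpha=\zero(f_\alpha)$ satisfies $Z_\alpha\cap\Lambda=N_\alpha$, where $(N_\alpha)_{\alpha\in\kappa}$ is the family of Lemma~\ref{almost_disjoint}. Next I would introduce the decreasing family of $z$-filters
\[
    \F_\alpha=\set{Z\in\zero[\Omega]:\ Z\cap\Lambda\supset\bigcap_{i=1}^n N_{\beta_i}\ \textrm{for some}\ \alpha\le\beta_1,\ldots,\beta_n\in\kappa}.
\]
Each $\F_\alpha$ is a proper $z$-filter because every finite intersection $\bigcap_i N_{\beta_i}$ contains $(\infty,\infty,\ldots)$, and $Z_\alpha\in\F_\alpha\setminus\F_\beta$ whenever $\alpha<\beta$: applying Lemma~\ref{almost_disjoint} with $F=\set{\beta_1,\ldots,\beta_n}$ and $G=\set{\alpha}$ yields $\closure{\bigcap_i N_{\beta_i}\setminus N_\alpha}=\bigcap_i N_{\beta_i}\neq\emptyset$, so $\bigcap_i N_{\beta_i}\not\subset N_\alpha$, contradicting any hypothetical inclusion.

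I would then build $(\Q_\alpha:\alpha\in\kappa)$ by transfinite recursion, in exact parallel with Theorem~\ref{decreasing_chain_continuum}. Take $\Q_0$ to be any prime $z$-filter extending $\F_0$; at a limit $\gamma$ set $\Q_\gamma=\bigcap_{\alpha<\gamma}\Q_\alpha$, which is prime by the standard argument for intersections of decreasing chains of primes and contains $\F_\gamma$ since $\F_\gamma\subset\F_\alpha\subset\Q_\alpha$ for every $\alpha<\gamma$; at a successor $\gamma=\alpha+1$ set $\D_\gamma=\set{Z\cup Z_\alpha:\ Z\in\zero[\Omega]\setminus\Q_\alpha}$ (closed under finite union) and verify $\F_\gamma\cap\D_\gamma=\emptyset$. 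For the latter, if $Z\cup Z_\alpha\in\F_\gamma$ with $Z\notin\Q_\alpha$, then $(Z\cap\Lambda)\cup N_\alpha\supset\bigcap_i N_{\beta_i}$ for some $\beta_i\ge\gamma$, so $Z\cap\Lambda\supset\bigcap_i N_{\beta_i}\setminus N_\alpha$, and taking closures by Lemma~\ref{almost_disjoint} gives $Z\cap\Lambda\supset\bigcap_i N_{\beta_i}$, forcing $Z\in\F_\gamma\subset\Q_\alpha$, a contradiction. A standard Zorn argument then supplies a prime $z$-filter $\Q_\gamma\supset\F_\gamma$ disjoint from $\D_\gamma$; the containment $\Q_\gamma\subset\Q_\alpha$ is automatic, for if $W\in\Q_\gamma\setminus\Q_\alpha$ then the superset $W\cup Z_\alpha$ lies simultaneously in $\Q_\gamma$ and $\D_\gamma$, and $Z_\alpha\notin\Q_\gamma$ because $Z_\alpha=\emptyset\cup Z_\alpha\in\D_\gamma$. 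Setting $Q_\alpha=\C_0(\Omega)\cap\zero^{-1}[\Q_\alpha]$ will then give the required strictly decreasing chain of prime $z$-ideals, strictness witnessed by $f_\alpha\in Q_\alpha\setminus Q_{\alpha+1}$.

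The decisive point is the sharper closure identity $\closure{\bigcap_{\alpha\in F} N_\alpha\setminus\bigcup_{\beta\in G} N_\beta}=\bigcap_{\alpha\in F} N_\alpha$ of Lemma~\ref{almost_disjoint}: the auxiliary ``large indices'' $H$ required in Lemma~\ref{almost_disjoint_continuum} are no longer needed, so the construction can dispense with a distinguished support point $p$ and thus accommodate arbitrary ordinals $\kappa$ of cardinality $\continuum$ rather than only $\continuum$ itself.
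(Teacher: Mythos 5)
Your proposal is correct and follows essentially the same route as the paper: embed $(\onepointcompactification{\naturals})^{\naturals}$ as a closed (Cantor) subset of the uncountable Polish space, take the family $(N_\alpha)_{\alpha\in\kappa}$ from Lemma~\ref{almost_disjoint}, define the same decreasing filters $\F_\alpha$ (your condition $Z\cap\Lambda\supset\bigcap_i N_{\beta_i}$ is equivalent to the paper's $Z\supset\bigcap_i N_{\beta_i}$ since each $N_{\beta_i}\subset\Lambda$), and rerun the transfinite recursion of Theorem~\ref{decreasing_chain_continuum} with Lemma~\ref{almost_disjoint} replacing Lemma~\ref{almost_disjoint_continuum}. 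You have merely written out in full the steps the paper compresses into ``the remaining of the proof is similar,'' and your closing observation about why the sharper closure identity removes the need for the large-index sets $H$ (and hence for restricting to $\kappa=\continuum$) is exactly the point.
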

\begin{proof}
Every uncountable Polish space contains a closed subsets
homeomorphic to the Cantor space $\set{0,1}^\naturals$, which in
turn contains a copy of
$(\onepointcompactification{\naturals})^\naturals$. Thus,  we
shall identify $(\onepointcompactification{\naturals})^\naturals$
with a closed subset of $X$ of $\Omega$ where
$(\infty,\infty,\ldots)$ is identified with some point $p$. Let
$(N_\alpha:\ \alpha\in\kappa)$ be the family of zero sets on $X$
as constructed in Lemma \ref{almost_disjoint}. Define
\[
    \F_\alpha=\set{Z\in\zero[\Omega]:\ Z\supset \bigcap_{i=1}^n
    N_{\beta_i}\ \textrm{for some}\ \alpha\le\beta_1,\ldots,\beta_n\in\kappa}.
\]
The remaining of the proof is similar to that of Theorem
\ref{decreasing_chain_continuum}, but applying Lemma
\ref{almost_disjoint} instead of Lemma
\ref{almost_disjoint_continuum}.
\end{proof}

\begin{corollary}
    Let $\kappa$ be any ordinal of cardinality $\continuum$. Then:
    \begin{enumerate}
        \item There exists a well-ordered decreasing chain of order type $\kappa$
         of prime $z$-filters on $\reals$ starting from any non-minimal prime $z$-filter.
        \item There exists a well-ordered decreasing chain of order type $\kappa$ of prime
    $z$-ideals in $\C_0(\reals)$ starting from any non-minimal prime $z$-ideals.
    \end{enumerate}
\end{corollary}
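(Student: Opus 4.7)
The plan is to adapt the construction of the preceding theorem so that the top of the chain coincides with the given non-minimal prime $z$-filter $\Pp$ on $\reals$, supported at some $p\in\onepointcompactification{\reals}$. Non-minimality together with the standard fact that the $z$-filter of zero sets containing a neighborhood of $p$ in $\onepointcompactification{\reals}$ is included in every prime $z$-filter at $p$ will furnish a witness zero set $W\in\Pp$ which contains no neighborhood of $p$; in particular there is a sequence in $\reals\setminus W$ tending to $p$. Using that sequence I would embed $X:=(\onepointcompactification{\naturals})^{\naturals}$ as a closed subset of $\onepointcompactification{\reals}$ --- filling pairwise disjoint compact intervals of $\reals\setminus W$ clustering at $p$ with copies of the Cantor set and adjoining $p$ --- so that $(\infty,\infty,\ldots)$ sits at $p$ (by homogeneity of the Cantor set) and $X\cap W\subset\set{p}$.

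Next, take the family $(N_\alpha)_{\alpha\in\kappa}$ on $X$ given by Lemma \ref{almost_disjoint} (from a Sierpinski family $(E_\alpha)$) and put $Z_\alpha:=N_\alpha\cup W$, a zero set of $\reals$ in $\Pp$ with $Z_\alpha\cap X=N_\alpha$. Define
\[
    \F_\alpha := \set{Z\in\Pp:\ Z\supset\bigcap_{i=1}^n N_{\beta_i}\text{ for some }\alpha\le\beta_1,\ldots,\beta_n\in\kappa},
\]
a proper $z$-filter contained in $\Pp$, decreasing in $\alpha$, with $Z_\alpha\in\F_\alpha$. Set $\Q_0:=\Pp$; at a successor $\gamma=\alpha+1$ let
\[
    \D_\gamma := \set{Z\cup Z_\alpha:\ Z\in\Pp\setminus\Q_\alpha}\cup\set{Z_\alpha},
\]
which is closed under finite union, and use Zorn's lemma inside $\Pp$ to extract a prime $z$-filter $\Q_\gamma$ with $\F_\gamma\subset\Q_\gamma\subset\Pp$ and $\Q_\gamma\cap\D_\gamma=\emptyset$; at a limit $\gamma$ set $\Q_\gamma:=\bigcap_{\beta<\gamma}\Q_\beta$.

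The crux is verifying $\F_\gamma\cap\D_\gamma=\emptyset$: if $Z\cup Z_\alpha\supset\bigcap_i N_{\beta_i}$ with $\beta_i\ge\gamma$ and $Z\in\Pp\setminus\Q_\alpha$, then $X\cap W\subset\set{p}$ gives $Z\cup N_\alpha\supset\bigcap_i N_{\beta_i}\setminus\set{p}$; since any finite union $\bigcup_i E_{\beta_i}$ is a proper subset of $\naturals$ in the Sierpinski construction, $p$ is a limit point of $\bigcap_i N_{\beta_i}\setminus\set{p}$, so closure yields $Z\cup N_\alpha\supset\bigcap_i N_{\beta_i}$; Lemma \ref{almost_disjoint} applied to $F=\set{\alpha}$ and $G=\set{\beta_i}$ then produces $Z\supset\bigcap_i N_{\beta_i}$, and $Z\in\Pp$ places $Z$ in $\F_\alpha\subset\Q_\alpha$, contradicting $Z\in\Pp\setminus\Q_\alpha$. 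Strict decrease at each step follows from $Z_\alpha\in\F_\alpha\cap\D_{\alpha+1}$; in particular $Z_0\notin\Q_1$ together with $W\subset Z_0$ forces $W\notin\Q_\gamma$ for every $\gamma\ge 1$, so $\Pp\setminus\Q_\alpha$ is nonempty at every later stage and the extra term $\set{Z_\alpha}$ in $\D_{\alpha+1}$ is really only needed to launch the chain from $\Q_0=\Pp$.

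The prime $z$-ideal version then follows from the standard correspondence $Q_\alpha:=\C_0(\reals)\cap\zero^{-1}[\Q_\alpha]$. The main obstacle will be the initial geometric step of pinning down the closed Cantor copy $X$ in $\onepointcompactification{\reals}$ so that it meets $W$ only at $p$ and places $(\infty,\infty,\ldots)$ at $p$; once that is in place, the inductive construction is a controlled rerun of the preceding theorem's argument with all filters confined inside $\Pp$.
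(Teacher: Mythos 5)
Your route is genuinely different from the paper's --- the paper disposes of (i) in one line by combining the preceding theorem with Mandelker's Theorem 12.8, which is cited precisely as the device for transplanting a chain of prime $z$-filters beneath an arbitrary non-minimal prime $z$-filter, and then gets (ii) from the fact that every zero set on $\reals$ is $\zero(f)$ for some $f\in\C_0(\reals)$ --- but your direct relativization of the construction to $\Pp$ has an essential gap at the step ``use Zorn's lemma inside $\Pp$ to extract a prime $z$-filter $\Q_\gamma$ with $\F_\gamma\subset\Q_\gamma\subset\Pp$ and $\Q_\gamma\cap\D_\gamma=\emptyset$.'' The standard separation lemma (a $z$-filter disjoint from a family closed under finite unions extends to a prime $z$-filter still disjoint from that family) does not tolerate the extra constraint $\Q_\gamma\subset\Pp$: the primes \emph{above} a prime $z$-filter form a chain, but the primes \emph{below} one do not, so you cannot first build $\Q_\gamma$ and then hope it lands inside $\Pp$, and the usual maximality argument for primeness breaks once the constraint is imposed. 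Concretely, if $Z_1\cup Z_2\in\Q_\gamma$ with $Z_1\in\Pp\setminus\Q_\gamma$ and $Z_2\notin\Pp$, the filter generated by $\Q_\gamma$ and $Z_2$ escapes $\Pp$ instead of meeting $\D_\gamma$, and your $\D_\gamma$ --- with $Z$ restricted to $\Pp\setminus\Q_\alpha$ and the ad hoc singleton $\set{Z_\alpha}$ adjoined --- is not closed under taking unions with zero sets outside $\Pp$, so no contradiction can be extracted.

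This is not a repairable technicality, because nothing in your argument actually uses non-minimality. The witness $W\in\Pp$ containing no neighbourhood of $p$ exists for \emph{every} prime $z$-filter supported at $p$: writing $\reals$ as the union of two closed half-lines and invoking primeness already produces a member of $\Pp$ that is not a neighbourhood of $p$ (nor of the point at infinity). Hence, were your construction valid, it would yield a strictly decreasing chain of prime $z$-filters below a minimal prime $z$-filter, which is absurd; and indeed for minimal $\Pp$ the very first step collapses --- $\F_1\cap\D_1=\emptyset$ holds (your verification via Lemma \ref{almost_disjoint} is correct, as is the embedding of the Cantor copy $X$ with $X\cap W\subset\set{p}$), yet the only prime $z$-filter between $\F_1$ and $\Pp$ is $\Pp$ itself, which contains $Z_0$. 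What is missing is exactly the ingredient the paper imports from \cite{mandelker1968}: a structural theorem converting the non-minimality of $\Pp$ into the ability to realize a prescribed chain of prime $z$-filters strictly below $\Pp$. You must either invoke that result, as the paper does, or prove a genuine substitute for it; the combinatorics of the $N_\alpha$ alone cannot manufacture primes inside $\Pp$.
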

\begin{proof}
    (i) follows from the theorem and \cite[Theorem
    12.8]{mandelker1968}, and (ii) follows from (i) and the fact that every zero set on $\reals$
    is the zero set of a function in $\C_0(\reals)$.
\end{proof}

\section{Well-ordered increasing chains of prime ideals and prime $z$-filters}
\label{increasingsection}

Let $\Omega$ be a metrizable locally compact space. Similar to the
previous section we shall only consider the case where
$\level{\onepointcompactification{\Omega}}{\infty}\neq\emptyset$.
Recall that there are many countable compact space satisfying this
condition. First we shall prove a general construction.

\begin{definition}
    Let $\kappa$ be any ordinal, and let $(Z_\alpha:\
    \alpha\in\kappa)$ be a family of zero sets on $\Omega$. A zero
    set $Z$ is said to \emph{have property (A)} (\emph{with respect to the
    family $(Z_\alpha: \ \alpha\in\kappa)$}) if for every (possibly empty) finite
    subset $F$ of $\kappa$  and every $\beta\in\kappa$ with $\max F<\beta$
    then
    \[
        Z\cap\bigcap_{\alpha\in F} Z_\alpha\nsubset
        Z_\beta.
    \]

    A zero set $Z$ is said to \emph{have property (B)} (\emph{with respect to the
    family $(Z_\alpha: \ \alpha\in\kappa)$}) if whenever $Z=\bigcup_{i=1}^n Z_i$
    for some $Z_1,\ldots, Z_n\in\zero[\Omega]$ then
    there exists $1\le k\le n$ such that $Z_k$ has property (A).
\end{definition}

\begin{lemma}
    \label{increasing_construction}
    Let $\kappa$ be any ordinal, and let $(Z_\alpha:\
    \alpha\in\kappa)$ be a family of zero sets on $\Omega$.
    Suppose that $\F$ is a $z$-filter on $\Omega$ such that every
    element of $\F$ has property (B) with respect to $(Z_\alpha: \ \alpha\in\kappa)$.
    Then there exists a well-ordered increasing chain $(\Q_\alpha:\
    \alpha\in \kappa)$ of prime $z$-filters containing $\F$ such that $Z_\alpha\notin \Q_\alpha$
    but $Z_\alpha\in \Q_\beta$ ($\alpha<\beta\in\kappa$).
\end{lemma}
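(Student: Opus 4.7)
The plan is a transfinite recursion on $\alpha\in\kappa$ using Zorn's lemma at each stage, maintaining the strengthened invariant that no finite union $Z_{\gamma_1}\cup\cdots\cup Z_{\gamma_n}$ with all indices $\gamma_i\ge\alpha$ lies in $\Q_\alpha$ (in particular, $Z_\gamma\notin\Q_\alpha$ for every $\gamma\ge\alpha$). This strengthening is forced: at a limit ordinal $\delta$ one needs $Z_\delta\notin\bigcup_{\alpha<\delta}\Q_\alpha$, which requires $Z_\delta\notin\Q_\alpha$ for every $\alpha<\delta$; at a successor $\delta=\alpha+1$ one needs the $z$-filter generated by $\Q_\alpha\cup\{Z_\alpha\}$ to avoid $Z_\delta$.

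For the base stage I apply Zorn to the family $\mathcal{C}_0$ of $z$-filters containing $\F$ that avoid every finite union $\bigcup_{i=1}^n Z_{\gamma_i}$ with $\gamma_i\in\kappa$. The filter $\F$ itself lies in $\mathcal{C}_0$: were some $V\in\F$ to satisfy $V\subset\bigcup_{i=1}^n Z_{\gamma_i}$, the decomposition $V=\bigcup_{i=1}^n(V\cap Z_{\gamma_i})$ together with property (B) would furnish a piece $V\cap Z_{\gamma_k}$ having property (A), and then (A) applied with $F=\emptyset$ and $\beta=\gamma_k$ would forbid the containment $V\cap Z_{\gamma_k}\subset Z_{\gamma_k}$, a contradiction. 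Primeness of a maximal $\Q_0\in\mathcal{C}_0$ comes from an adapted Zorn argument: were $V_1\cup V_2\in\Q_0$ with neither $V_i\in\Q_0$, the $z$-filter generated by $\Q_0\cup\{V_i\}$ would leave $\mathcal{C}_0$, yielding $W_i\in\Q_0$ with $W_i\cap V_i$ contained in some forbidden finite union $\bigcup_j Z_{\gamma_j^i}$; then $(V_1\cup V_2)\cap W_1\cap W_2\in\Q_0$ is contained in $\bigcup_j Z_{\gamma_j^1}\cup\bigcup_j Z_{\gamma_j^2}$, itself a forbidden finite union which therefore belongs to $\Q_0$ by closure under supersets, contradicting $\Q_0\in\mathcal{C}_0$.

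At a successor $\gamma=\alpha+1$ I let $\H$ be the $z$-filter generated by $\Q_\alpha\cup\{Z_\alpha\}$ and extend $\H$ by Zorn, within the shifted collection $\mathcal{C}_\gamma$ (avoiding finite unions indexed in $[\gamma,\kappa)$), to a prime $z$-filter $\Q_\gamma$. The key sub-claim is $\H\in\mathcal{C}_\gamma$; this is where the strengthened invariant on $\Q_\alpha$ and property (A) of the witnesses from $\F$ combine. At a limit $\gamma$ I take $\Q_\gamma=\bigcup_{\alpha<\gamma}\Q_\alpha$: this is a prime $z$-filter (union of a chain of primes), and the invariant transfers to $\Q_\gamma$ because any forbidden finite union indexed in $[\gamma,\kappa)$ lying in $\Q_\gamma$ would already lie in some $\Q_\alpha$ with $\alpha<\gamma$, contradicting that earlier invariant since $[\gamma,\kappa)\subset[\alpha,\kappa)$.

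The main obstacle is the successor-step verification that $\H\in\mathcal{C}_\gamma$: an element $V\in\Q_\alpha$ need not itself have property (B), because $\Q_\alpha$ is a proper extension of $\F$. The hypothesis that elements of $\F$ have (B) is calibrated precisely for this task via the observation that property (B) is preserved under supersets — if $V$ has (B) and $V\subset V'$, any decomposition $V'=\bigcup V'_i$ restricts to $V=\bigcup(V\cap V'_i)$ whose (B)-guaranteed (A)-piece $V\cap V'_k$ transfers, by superset-preservation of (A), to give (A) of $V'_k$ — which lets the control from $\F$ propagate through the $z$-filter closure up to all elements of $\Q_\alpha$.
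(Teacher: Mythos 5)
There is a genuine gap at the successor step, and you have in fact located it yourself: the verification that $\mathcal{H}$, the $z$-filter generated by $\Q_\alpha\cup\set{Z_\alpha}$, lies in $\mathcal{C}_{\alpha+1}$ is never carried out, and the closing remark about superset-preservation of property (B) does not supply it. That observation only controls zero sets that contain some element of $\F$; but $\Q_\alpha$ is produced by applications of Zorn's lemma (at stage $0$ and again at each earlier successor), and the elements adjoined to achieve maximality and primeness need not contain any member of $\F$ and need not have property (B). For such a $V\in\Q_\alpha$, your invariant only guarantees $V\nsubset\bigcup_{\beta\in G}Z_\beta$ for finite $G\subset[\alpha,\kappa)$, which does not rule out $V\cap Z_\alpha\subset\bigcup_{\beta\in G}Z_\beta$ with $G\subset[\alpha+1,\kappa)$ --- and that is exactly what must be excluded for $\mathcal{H}\in\mathcal{C}_{\alpha+1}$. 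Strengthening the invariant to ``$V\cap\bigcap_{\gamma\in F}Z_\gamma\nsubset\bigcup_{\beta\in G}Z_\beta$ for all finite $F\subset[0,\alpha)$ and finite $G\subset[\alpha,\kappa)$'' is the natural repair, but establishing it at stage $0$ from property (B) alone fails for the same reason that Lemma \ref{almost_disjoint_continuum2} needs a separate inductive argument: property (A) of a piece only forbids containment in a single $Z_\beta$, not containment of $V\cap\bigcap_{\gamma\in F}Z_\gamma$ in a finite union.

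The paper sidesteps all of this by performing exactly one prime extension. Let $\D$ be the collection of all zero sets failing property (B); it is closed under finite unions and contains every $Z_\alpha$, and $\F\cap\D=\emptyset$ by hypothesis, so there is a prime $z$-filter $\Q_0\supset\F$ with $\Q_0\cap\D=\emptyset$. Then $\Q_\alpha$ is simply \emph{defined} as the $z$-filter generated by $\Q_0\cup\set{Z_\gamma:\ \gamma<\alpha}$: it is automatically prime because every $z$-filter containing a prime $z$-filter is prime, so no further Zorn step (and hence no further invariant) is required; and if $Z_\alpha\supset N\cap\bigcap_{\gamma\in F}Z_\gamma$ with $N\in\Q_0$ and $F\subset[0,\alpha)$ finite, then $N$ --- which has property (B), hence property (A) via the trivial decomposition $N=N$ --- would violate property (A) at $\beta=\alpha$, so $Z_\alpha\notin\Q_\alpha$. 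Your base step and your primeness-of-a-maximal-element argument are fine; it is the transfinite re-application of Zorn that creates the obligation you cannot discharge.
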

\begin{proof}
    Let $\D$ be the collection of all zero sets not having
    property (B). Then obviously $\D$ is closed under
    finite union and $(Z_\alpha:\ \alpha\in\kappa)\subset \D$.
    Since $\F\cap\D=\emptyset$, there exists a prime $z$-filter
    $\Q_0$ such that $\F\subset \Q_0$ and $\Q_0\cap \D=\emptyset$.
    We then define $\Q_\alpha$ to be the $z$-filter generated by
    $\Q_0$ and $\set{Z_\gamma: \gamma<\alpha}$. It follows that
    $Q_\alpha$ is a prime $z$-filter, $\Q_\alpha\subset\Q_\beta$
    and $Z_\alpha\in \Q_\beta$ ($\alpha<\beta\in\kappa$). We need
    to show that $Z_\alpha\notin \Q_\alpha$ ($\alpha\in\kappa$)
    (and thus $(\Q_\alpha)$ is increasing). Assume towards
    a contradiction that $Z_\alpha\in \Q_\alpha$
    for some $\alpha\in\kappa$. Then, there exist $N\in\Q_0$ and
    a finite subset $F$ of $\set{\gamma: \gamma< \alpha}$ such
    that
    \[
        Z_\alpha\supset N\cap\bigcap_{\gamma\in F} Z_\gamma.
    \]
    This implies that $N\in \D$ a contradiction.
\end{proof}

\begin{lemma}
    \label{almost_disjoint_continuum2}
    There exists a family $(N_\alpha)_{\alpha\in\continuum}$ of zero
    sets on
    $\Xi$ satisfying that $\Xi$ has property (B) with respect to
    $(N_\alpha:\ \alpha\in \continuum)$.
\end{lemma}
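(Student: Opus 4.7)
I would take $(N_\alpha)_{\alpha \in \continuum}$ to be the very family already constructed from the Sierpinski almost-disjoint system in the proof of Lemma~\ref{almost_disjoint_continuum}, namely $N_\alpha = \{(j_i) \in \Xi : j_n = \infty \text{ for all } n \in E_\alpha\}$, and then verify that $\Xi$ has property~(B) with respect to this family by contradiction. Thus, suppose $\Xi = Z_1 \cup \cdots \cup Z_n$ is a covering by zero sets in which each $Z_i$ fails property~(A), and fix witnesses $(F_i, \beta_i)$ satisfying $\max F_i < \beta_i$ and $Z_i \cap \bigcap_{\alpha \in F_i} N_\alpha \subset N_{\beta_i}$. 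After relabelling, one may assume $\beta_1 \le \beta_2 \le \cdots \le \beta_n$.

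The plan is to iterate Lemma~\ref{almost_disjoint_continuum} so as to ``peel off'' the witnesses from the bottom. I would construct by induction on $k \in \{0, \ldots, n-1\}$ finite sets $G_k \subset \continuum$ disjoint from $\{\beta_{k+1}, \ldots, \beta_n\}$ and satisfying $\bigcap_{\alpha \in G_k} N_\alpha \subset Z_{k+1} \cup \cdots \cup Z_n$, starting with $G_0 = \emptyset$. At the step from $G_k$ to $G_{k+1}$, the witness $(F_{k+1}, \beta_{k+1})$ combined with the inclusion at step $k$ yields
\[
\bigcap_{\alpha \in G_k \cup F_{k+1}} N_\alpha \setminus N_{\beta_{k+1}} \subset Z_{k+2} \cup \cdots \cup Z_n;
\]
applying Lemma~\ref{almost_disjoint_continuum} to the ``kept'' set $G_k \cup F_{k+1}$ and the ``removed'' set $\{\beta_{k+1}\}$, with $\gamma = \beta_n + 1$, I would then produce a finite $H_{k+1}$ with $\min H_{k+1} > \beta_n$ and $\bigcap_{\alpha \in G_k \cup F_{k+1} \cup H_{k+1}} N_\alpha$ lying in the closure of the left-hand side, hence in the closed right-hand side. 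Setting $G_{k+1} := G_k \cup F_{k+1} \cup H_{k+1}$ preserves the invariant because $\beta_j \ge \beta_{k+1} > \max F_{k+1}$ and $\beta_j \le \beta_n < \min H_{k+1}$ for every $j > k+1$. Finally, at $k = n-1$, combining $\bigcap_{\alpha \in G_{n-1}} N_\alpha \subset Z_n$ with the witness for $Z_n$ yields $\bigcap_{\alpha \in G_{n-1} \cup F_n} N_\alpha \subset N_{\beta_n}$ with $\beta_n$ outside this index set, and I would contradict this by picking $m \in E_{\beta_n} \setminus \bigcup_{\alpha \in G_{n-1} \cup F_n} E_\alpha$ (nonempty by almost-disjointness) and exhibiting the point of $\Xi$ whose $m$-th coordinate is a large finite integer and all other coordinates are $\infty$.

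The main obstacle is keeping the invariant $G_k \cap \{\beta_{k+1}, \ldots, \beta_n\} = \emptyset$ alive through all $n$ steps, since this disjointness is exactly what makes Lemma~\ref{almost_disjoint_continuum} non-vacuous at each peeling step: if some later $\beta_j$ were already in $G_k$, then $\bigcap_{\alpha \in G_k} N_\alpha$ would lie inside $N_{\beta_j}$, and the corresponding inclusion would collapse to a trivial one. The ordering of the $\beta_i$'s and the buffer $\min H_i > \beta_n$ are precisely what is needed to prevent this collapse.
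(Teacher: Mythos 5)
Your proposal is correct and follows essentially the same route as the paper's own proof: the same family $N_\alpha$ built from the Sierpinski system, and the same argument by contradiction that peels off the sets $Z_1,\ldots,Z_n$ one at a time, invoking Lemma \ref{almost_disjoint_continuum} at each step to trade the set difference $\setminus N_{\beta_{k+1}}$ for an intersection with finitely many further $N_\alpha$'s whose indices are pushed above all the witnesses. The only immaterial differences are bookkeeping: the paper orders the witnesses by $\max F_i$ rather than by $\beta_i$ (and allows finite sets $G_i$ in place of your single indices $\beta_i$), and it runs the induction one step further to reach an inclusion into $\emptyset$ instead of contradicting the final inclusion $\bigcap_{\alpha\in G_{n-1}\cup F_n}N_\alpha\subset N_{\beta_n}$ with an explicit point of $\Xi$.
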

\begin{proof}
Let $(E_\alpha:\ \alpha\in\continuum)$ and $(N_\alpha:\
\alpha\in\continuum)$ be defined as in Lemma
\ref{almost_disjoint_continuum}.

We shall prove a little stronger statement. Assume towards a
contradiction that there exists $Z_1,\ldots, Z_n\in\zero[\Xi]$
such that $\Xi=\bigcup_{i=1}^nZ_i$ and that, for each $1\le i\le
n$, there exist finite subsets $F_i$ and $G_i$ of $\kappa$ with
$\max F_i<\min G_i$ such that
\[
    Z_i\cap\bigcap_{\alpha\in F_i} N_\alpha\subset \bigcup_{\beta\in G_i}
    N_{\beta}.
\]
Without loss of generality, we can suppose that
\[
    \max F_1\le\max F_2\le\ldots \max F_n.
\]
Fix $\gamma\in\continuum$ such that $\gamma>\max G_i$ ($1\le i\le
n$). We shall prove by induction that there exist finite subsets
$H_i$ of $\continuum$ with $\gamma\le \min H_i$ such that
\[
    \bigcap_{i=1}^{k-1}\bigcap_{\alpha\in F_i\cup H_i} N_\alpha\subset
    \bigcup_{i=k}^n Z_i \qquad (1\le k\le n+1).
\]
This is obviously true when $k=1$ since both sides are $\Xi$.
Suppose that the above is true for some $k<n$. Then we see that
\[
    \bigcap_{i=1}^{k-1}\bigcap_{\alpha\in F_i\cup H_i} N_\alpha\cap
    \bigcap_{\alpha\in F_k} N_\alpha\setminus \bigcup_{\beta\in G_k}
    N_{\beta}\subset \bigcup_{i=k+1}^n Z_i.
\]
Because
\[
    G_k\cap\bigg(\bigcup_{i=1}^k F_i\cup\bigcup_{i=1}^{k-1}
    H_i\bigg)=\emptyset,
\]
by Lemma \ref{almost_disjoint_continuum}, there exists a finite
subset $H_k$ of $\continuum$ such that $\min H_k>\gamma$ and that
\[
    \bigcap_{i=1}^{k}\bigcap_{\alpha\in F_i\cup H_i} N_\alpha\subset
    \closure{\bigcap_{i=1}^{k-1}\bigcap_{\alpha\in F_i\cup H_i} N_\alpha\cap
    \bigcap_{\alpha\in F_k} N_\alpha\setminus \bigcup_{\beta\in G_k}
    N_{\beta}}\subset \bigcup_{i=k+1}^n Z_i.
\]
Thus, the induction can be continued, and so, for $k=n+1$, we have
\[
    \bigcap_{i=1}^{n}\bigcap_{\alpha\in F_i\cup H_i} N_\alpha\subset
    \emptyset;
\]
this is a contradiction.
\end{proof}

\begin{theorem}
    \label{increasing_chain_continuum}
    Let $\Omega$ be a metrizable locally compact space, and
    let $p\in\level{\onepointcompactification{\Omega}}{\infty}$.
    Then there exists a well-ordered increasing chain
    $(\Q_\alpha:\ \alpha\in \continuum)$ of prime $z$-filters on
    $\Omega$ each supported at $p$.

    Furthermore, by setting $Q_\alpha=\C_0(\Omega)\cap\zero^{-1}[\Q_\alpha]$,
    we obtain a well-ordered increasing chain $(Q_\alpha:\ \alpha\in \continuum)$ of prime
    $z$-ideals in $\C_0(\Omega)$ each supported at $p$.
\end{theorem}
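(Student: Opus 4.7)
The plan is to adapt the proof of Theorem~\ref{decreasing_chain_continuum}, substituting Lemma~\ref{almost_disjoint_continuum2} for Lemma~\ref{almost_disjoint_continuum} and invoking Lemma~\ref{increasing_construction} to assemble the chain. Via Lemma~\ref{embedprototype} I identify $\Xi$ with a closed subset of $\onepointcompactification{\Omega}$ in which $(\infty,\infty,\ldots)$ corresponds to $p$, further requiring $\Xi\subset\Omega$ when $p\in\Omega$. Lemma~\ref{almost_disjoint_continuum2} supplies a family $(N_\alpha:\alpha\in\continuum)$ of zero sets on $\Xi$ with respect to which $\Xi$ has property~(B), and by Urysohn's lemma I lift each $N_\alpha$ to a zero set $Z_\alpha=\zero(f_\alpha)$ on $\Omega$ with $f_\alpha\in\C_0(\Omega)$ and $(Z_\alpha\cup\set{p})\cap\Xi=N_\alpha$.

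I then take as initial $z$-filter on $\Omega$
\[
    \F=\set{Z\in\zero[\Omega]:\ (Z\cup\set{p})\cap\Xi=\Xi},
\]
and claim that every $Z\in\F$ has property~(B) relative to $(Z_\alpha)$. Indeed, given a zero-set decomposition $Z=\bigcup_{i=1}^n Z_i$, the identity $\Xi=\bigcup_{i=1}^n(Z_i\cup\set{p})\cap\Xi$ together with Lemma~\ref{almost_disjoint_continuum2} yields an index $k$ for which $(Z_k\cup\set{p})\cap\Xi$ has property~(A) on $\Xi$ with respect to $(N_\alpha)$; combining $(Z_\alpha\cup\set{p})\cap\Xi=N_\alpha$ with the fact that $p\in N_\beta$ for every $\beta$, a short check transfers this to property~(A) for $Z_k$ on $\Omega$ relative to $(Z_\alpha)$. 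Lemma~\ref{increasing_construction} applied to $\F$ and $(Z_\alpha)$ then delivers a well-ordered increasing chain $(\Q_\alpha:\alpha\in\continuum)$ of prime $z$-filters on $\Omega$, each containing $\F$, with $Z_\alpha\in\Q_\beta\setminus\Q_\alpha$ whenever $\alpha<\beta$; setting $Q_\alpha=\C_0(\Omega)\cap\zero^{-1}[\Q_\alpha]$ gives the induced strictly increasing chain of prime $z$-ideals, witnessed by $f_\alpha\in Q_\beta\setminus Q_\alpha$.

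The delicate part --- which I expect to be the main obstacle --- is the support condition for each $\Q_\alpha$. Since the supports of prime $z$-filters in a chain can only specialize toward the point at infinity, it suffices to verify that $\Q_0$ is supported at $p$; in the case $p\in\Omega$, the arrangement $p\in Z_\gamma$ (which can be imposed at the lifting stage) then propagates $p$-support to each $\Q_\alpha$. Observe that the ``diagonal'' collection $\D_*=\set{Z\in\zero[\Omega]:\ (Z\cup\set{p})\cap\Xi=\set{p}}$ employed in Theorem~\ref{decreasing_chain_continuum} is automatically disjoint from $\Q_0$, for $\Xi\setminus\set{p}\in\F\subset\Q_0$ forces any $Z\in\Q_0\cap\D_*$ to yield the contradiction $\emptyset=Z\cap(\Xi\setminus\set{p})\in\Q_0$; in particular every $Z\in\Q_0$ meets $\Xi\setminus\set{p}$. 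The subtle remaining step --- echoing the cryptic support argument at the end of Theorem~\ref{decreasing_chain_continuum}, attributed to $\bigcap_{\alpha\in\continuum}N_\alpha=\set{p}$ --- is to combine this with a careful analysis of how elements of $\Q_0$ interact with the clustering of the lifts $Z_\gamma$ at $p$ to pin down the unique support of $\Q_0$ as $p$, possibly by controlling $Z_\alpha\cap(\Omega\setminus\Xi)$ in the lifting step so that zero sets whose trace on $\Xi$ is bounded away from $p$ automatically fail property~(B) and are excluded from $\Q_0$.
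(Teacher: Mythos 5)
Your construction of the chain itself coincides with the paper's: the same embedding of $\Xi$, the same family $(N_\alpha)$ from Lemma \ref{almost_disjoint_continuum2}, the same lifts $Z_\alpha=\zero(f_\alpha)$, and the same application of Lemma \ref{increasing_construction}; your only variation is to start from the $z$-filter of all zero sets whose trace on $\Xi$ is all of $\Xi$ rather than from $\set{\Omega}$, and your transfer of property (A) from $(Z_k\cup\set{p})\cap\Xi$ back to $Z_k$ (using $p\in N_\beta$) is exactly the verification the paper leaves implicit in the sentence ``it follows that $\Omega$ has property (B)''. Up to and including the strict increase of the chains $(\Q_\alpha)$ and $(Q_\alpha)$, the proposal is correct and essentially identical to the paper's argument.

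The support condition is where the proposal has a genuine gap, and you have in effect conceded it. Two specific problems. First, the claimed ``automatic'' disjointness $\Q_0\cap\D_*=\emptyset$ does not follow as written: $\Xi\setminus\set{p}$ is not an element of $\F$ (when $p\in\Omega$ it is not even closed in $\Omega$, let alone a zero set), and replacing it by some $W\in\F$ with $W\supset\Xi\setminus\set{p}$ does not help, because for $Z\in\D_*$ the intersection $Z\cap W$ can be nonempty off $\Xi$, so no contradiction arises. Disjointness from $\D_*$ must instead be built into the choice of $\Q_0$, i.e., one must enlarge the forbidden family $\D$ of Lemma \ref{increasing_construction} to $\D\cup\D_*$ and check that this is still closed under finite unions and misses $\F$ --- and that check genuinely requires controlling the lifts $Z_\alpha$ off $\Xi$ (for instance, so that a zero set whose trace on $\Xi$ is contained in $\set{p}$ cannot have property (A)), which is exactly the step you defer with ``possibly by controlling $Z_\alpha\cap(\Omega\setminus\Xi)$''. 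Second, even granted $\Q_0\cap\D_*=\emptyset$, this only says that every $Z\in\Q_0$ meets $\Xi\setminus\set{p}$; ``supported at $p$'' requires that $p$, and no other point, lie in every $Z\in\Q_0$. In the decreasing-chain proof this is extracted from the facts that all the $Z_\beta$ lie in $\Q_0$ and that $\bigcap_\beta N_\beta=\set{p}$, via compactness of $\Xi$; here $\Q_0$ contains none of the $Z_\beta$, so that mechanism is unavailable and a different argument (a richer initial filter or a larger forbidden family) is needed. To be fair, the paper itself dismisses all of this with ``the rest is similar to Theorem \ref{decreasing_chain_continuum}'', so you have correctly located the one place where the argument is not routine; but as it stands your proposal does not prove the ``each supported at $p$'' clause of the statement.
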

\begin{proof}
Similar to Theorem \ref{constructpsfprimes}, we shall identify
$\Xi$ with a closed subset of $\onepointcompactification{\Omega}$
such that $(\infty,\infty,\ldots)$ is identified with $p$; in the
case where $p\in\Omega$, we can further assume that
$\Xi\subset\Omega$ (cf. Lemmas \ref{embedprototype}).

Let $(N_\alpha:\ \alpha\in\continuum)$ be the family of zero sets
on $\Xi$ as constructed in Lemma \ref{almost_disjoint_continuum2}.
For each $\alpha\in\continuum$, choose $Z_\alpha=\zero(f_\alpha)$
for some $f_\alpha\in\C_0(\Omega)$ such that
$(Z_\alpha\cup\set{p})\cap \Xi=N_\alpha$. It follows that $\Omega$
has property (B) with respect to $(Z_\alpha:\
\alpha\in\continuum)$. Thus, by Lemma
\ref{increasing_construction} where $\F=\set{\Omega}$, there
exists a well-ordered  increasing chain $(\Q_\alpha:\
    \alpha\in \continuum)$ of prime $z$-filters such that $Z_\alpha\notin \Q_\alpha$
    but $Z_\alpha\in \Q_\beta$ ($\alpha<\beta\in\continuum$).

The rest is similar to Theorem \ref{decreasing_chain_continuum}.
\end{proof}

\begin{corollary}
    Let $\Delta$ be any countable compact subset of $\reals$ such that
    $\level{\Delta}{\infty}\neq\emptyset$. There exists a well-ordered
    increasing chain of order type $\continuum$ of prime
    $z$-filters on $\reals$ such that each prime $z$-filter
    contains $\Delta$. \enproof
\end{corollary}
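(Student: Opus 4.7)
My plan is to reduce to Theorem \ref{increasing_chain_continuum} applied to $\Omega=\Delta$ itself, and then lift the resulting chain of prime $z$-filters on $\Delta$ to a chain on $\reals$ by pulling back along intersection with $\Delta$. Since $\Delta$ is compact metrizable and adjoining the point at infinity to a compact space adds only an isolated point, one has $\level{\onepointcompactification{\Delta}}{\infty}=\level{\Delta}{\infty}\neq\emptyset$, so the theorem applies and yields a well-ordered increasing chain $(\Q_\alpha:\ \alpha\in\continuum)$ of prime $z$-filters on $\Delta$ each supported at some $p\in\level{\Delta}{\infty}$.

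For each $\alpha\in\continuum$ I would then set
\[
    \Pp_\alpha=\set{Z\in\zero[\reals]:\ Z\cap \Delta\in \Q_\alpha}.
\]
Verifying that each $\Pp_\alpha$ is a prime $z$-filter on $\reals$ is routine: nonemptiness, properness, and closure under finite intersection and supersets follow directly from the corresponding properties of $\Q_\alpha$, while primeness transfers via the identity $(Z_1\cup Z_2)\cap \Delta=(Z_1\cap \Delta)\cup(Z_2\cap \Delta)$ together with primeness of $\Q_\alpha$. Moreover $\Delta$ is a zero set on $\reals$ (being closed in a metric space) and $\Delta\cap \Delta=\Delta\in \Q_\alpha$, so $\Delta\in \Pp_\alpha$; and the inclusion $\Pp_\alpha\subset \Pp_\beta$ for $\alpha<\beta$ is immediate from the definition and the inclusion $\Q_\alpha\subset \Q_\beta$.

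The only substantive point is to verify that this lifting preserves \emph{strict} inclusion. For $\alpha<\beta$ I would pick $N\in \Q_\beta\setminus \Q_\alpha$; since $N$ is closed in the compact metrizable space $\Delta$, it is also closed in $\reals$ and therefore a zero set on $\reals$, and $N\cap \Delta=N$ exhibits $N\in \Pp_\beta\setminus \Pp_\alpha$. I do not foresee a genuine obstacle, since all the combinatorial content has already been absorbed into Lemma \ref{almost_disjoint_continuum2} and Theorem \ref{increasing_chain_continuum}; this corollary plays, for the increasing case, exactly the role that the analogous unproved corollary played for the decreasing case in \S\ref{decreasingsection}.
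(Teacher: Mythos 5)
Your argument is correct, and it cleanly fills in a proof that the paper leaves as immediate. The route is, however, not quite the one the paper presumably intends. The natural in-paper argument is to run the proof of Theorem \ref{increasing_chain_continuum} with $\Omega=\reals$ but with the copy of $\Xi$ embedded inside $\Delta$ (possible since $\level{\Delta}{\infty}\neq\emptyset$), and to start Lemma \ref{increasing_construction} not from $\F=\set{\Omega}$ but from the $z$-filter of all zero sets containing $\Delta$; one then checks that every such zero set has property (B), because it contains $\Xi$ and property (A) is inherited by supersets, so Lemma \ref{almost_disjoint_continuum2} still applies. That route produces the chain on $\reals$ directly with $\Delta\in\Q_0$. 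Your route instead applies the theorem as a black box to $\Omega=\Delta$ itself (legitimate, since $\Delta$ is compact metrizable and the adjoined point at infinity is isolated, so $\level{\onepointcompactification{\Delta}}{\infty}=\level{\Delta}{\infty}\neq\emptyset$) and then transfers the chain by taking traces, $\Pp_\alpha=\set{Z\in\zero[\reals]:\ Z\cap\Delta\in\Q_\alpha}$. All the verifications you list go through: every closed subset of the metric space $\reals$ (in particular $\Delta$ and every closed subset of $\Delta$) is a zero set, so the trace map $\zero[\reals]\to\zero[\Delta]$ is surjective, which is exactly what makes both the primeness transfer and the strictness argument work. What your approach buys is modularity --- no need to reopen the proof of the theorem or reverify property (B) for a larger initial filter; what the paper's approach buys is that the same one-line justification covers both the decreasing and the increasing corollaries uniformly.
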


For longer chains, as in \S\ref{decreasingsection}, we need to
restrict to uncountable locally compact Polish spaces. Again, in
the case where $\Omega$ is in addition $\sigma$-compact, it will
restrict the ordinal $\kappa$ under consideration to have
cardinality at most $\continuum$.

\begin{lemma}
    \label{almost_disjoint2}
    Let $\kappa$ be an ordinal of cardinality $\continuum$.
    There exists a family $(N_\alpha)_{\alpha\in\kappa}$ of zero
    sets on $(\onepointcompactification{\naturals})^\naturals$
     such that $(\onepointcompactification{\naturals})^\naturals$ has property (B) with respect to
    $(N_\alpha:\ \alpha\in \kappa)$.
\end{lemma}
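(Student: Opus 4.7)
The plan is to follow the template of Lemma \ref{almost_disjoint_continuum2}, but substitute Lemma \ref{almost_disjoint} for Lemma \ref{almost_disjoint_continuum} at the induction step. The upgrade is actually a simplification, because in Lemma \ref{almost_disjoint} the closure of the set-difference recovers the full intersection without needing to enlarge the index set; whereas in Lemma \ref{almost_disjoint_continuum} one only had a supset after passing to a larger finite index set $H$.

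Concretely, I would take $(N_\alpha:\alpha\in\kappa)$ to be the family of zero sets on $(\onepointcompactification{\naturals})^\naturals$ supplied by Lemma \ref{almost_disjoint}. To show property (B), I argue by contradiction: suppose $(\onepointcompactification{\naturals})^\naturals=\bigcup_{i=1}^n Z_i$ with each $Z_i$ failing property (A). Then for every $i$ there exist disjoint finite sets $F_i,G_i\subset\kappa$ with $\max F_i<\min G_i$ such that
\[
    Z_i\cap\bigcap_{\alpha\in F_i} N_\alpha \subset \bigcup_{\beta\in G_i} N_\beta.
\]
Permuting the indices, I may assume $\max F_1\le\max F_2\le\cdots\le\max F_n$.

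I would then prove by induction on $k$ the inclusion
\[
    \bigcap_{i=1}^{k-1}\bigcap_{\alpha\in F_i} N_\alpha \,\subset\, \bigcup_{i=k}^n Z_i \qquad (1\le k\le n+1),
\]
which is trivially true for $k=1$ since both sides equal $(\onepointcompactification{\naturals})^\naturals$. For the step, intersect the $k$-th version with $\bigcap_{\alpha\in F_k} N_\alpha$ and remove $\bigcup_{\beta\in G_k} N_\beta$; the right-hand side becomes $\bigcup_{i=k+1}^n Z_i$ (since $Z_k$ is absorbed by the failure of property (A) for $Z_k$). Now take closures: on the left, because $\max F_i\le \max F_k<\min G_k$ for all $i\le k$, the sets $\bigcup_{i=1}^k F_i$ and $G_k$ are disjoint, so Lemma \ref{almost_disjoint} yields
\[
    \closure{\bigcap_{i=1}^{k}\bigcap_{\alpha\in F_i} N_\alpha \setminus \bigcup_{\beta\in G_k} N_\beta}
    \,=\, \bigcap_{i=1}^{k}\bigcap_{\alpha\in F_i} N_\alpha,
\]
while the right-hand side $\bigcup_{i=k+1}^n Z_i$ is already closed. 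This completes the induction.

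At $k=n+1$ the conclusion reads $\bigcap_{i=1}^n\bigcap_{\alpha\in F_i} N_\alpha\subset\emptyset$, which is a contradiction since $(\infty,\infty,\ldots)$ lies in every $N_\alpha$. The only mild obstacle is bookkeeping: verifying that $\bigcup_{i=1}^k F_i$ and $G_k$ are indeed disjoint at each induction step, which is exactly why one orders the $F_i$'s by $\max F_i$ at the outset. Everything else is a direct transcription of the proof of Lemma \ref{almost_disjoint_continuum2}, with the strengthened Lemma \ref{almost_disjoint} removing the need to introduce the auxiliary sets $H_i$.
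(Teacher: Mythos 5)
Your proof is correct and is exactly the argument the paper intends: the paper's proof of this lemma is a one-line instruction to repeat the proof of Lemma \ref{almost_disjoint_continuum2} with Lemma \ref{almost_disjoint} in place of Lemma \ref{almost_disjoint_continuum}, and you have carried out that substitution faithfully, including the correct observation that the equality in Lemma \ref{almost_disjoint} makes the auxiliary sets $H_i$ unnecessary. The ordering of the $F_i$ by $\max F_i$ to secure disjointness of $\bigcup_{i\le k}F_i$ and $G_k$, and the final appeal to $(\infty,\infty,\ldots)\in\bigcap_\alpha N_\alpha$ for the contradiction, match the paper's template.
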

\begin{proof}
    Similar to Lemma \ref{almost_disjoint_continuum2}, here we apply Lemma \ref{almost_disjoint}
    instead of Lemma \ref{almost_disjoint_continuum}.
\end{proof}

\begin{theorem}
    Let $\Omega$ be an uncountable locally compact Polish space. Let $\kappa$ be
    an ordinal of cardinality $\continuum$. Then there exists a well-ordered increasing
    chain $(\Q_\alpha:\ \alpha\in \kappa)$ of prime $z$-filters on
    $\Omega$.

    Furthermore, by setting $Q_\alpha=\C_0(\Omega)\cap\zero^{-1}[\Q_\alpha]$,
    we obtain a well-ordered  increasing chain $(Q_\alpha:\ \alpha\in \kappa)$ of prime
    $z$-ideals in $\C_0(\Omega)$.
\end{theorem}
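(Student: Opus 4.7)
The plan is to replicate the argument for Theorem \ref{increasing_chain_continuum} in the Polish-space setting used for the analogous decreasing chain theorem above. Every uncountable Polish space contains a closed subset homeomorphic to the Cantor space $\set{0,1}^{\naturals}$, and hence a closed copy of $(\onepointcompactification{\naturals})^\naturals$; I identify the latter with a closed subset $X$ of $\Omega$. Applying Lemma \ref{almost_disjoint2} yields a family $(N_\alpha:\ \alpha\in\kappa)$ of zero sets on $X$ such that $X$ has property (B) with respect to $(N_\alpha)$.

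Because $\Omega$ is a locally compact Polish space, it is $\sigma$-compact, and open subsets of $\Omega$ are themselves $\sigma$-compact; so every closed subset of $\Omega$ is the zero set of some function in $\C_0(\Omega)$. Each $N_\alpha$, being closed in $X$ and therefore closed in $\Omega$, is thus $\zero(f_\alpha)$ for some $f_\alpha\in\C_0(\Omega)$. Setting $Z_\alpha:=\zero(f_\alpha)$, I have $Z_\alpha\cap X=N_\alpha$.

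The central step is to verify that $\Omega$ has property (B) with respect to $(Z_\alpha:\ \alpha\in\kappa)$. Suppose $\Omega=\bigcup_{i=1}^{n}W_i$ for some $W_i\in\zero[\Omega]$. Then $X=\bigcup_{i=1}^{n}(W_i\cap X)$, a finite union of zero sets of $X$, so property (B) of $X$ yields an index $k$ such that $W_k\cap X$ has property (A) with respect to $(N_\alpha)$. Given any finite $F\subset\kappa$ and $\beta\in\kappa$ with $\max F<\beta$, pick $x\in (W_k\cap X)\cap\bigcap_{\alpha\in F}N_\alpha\setminus N_\beta$; since $Z_\alpha\cap X=N_\alpha$, this $x$ lies in $W_k\cap\bigcap_{\alpha\in F}Z_\alpha$ but outside $Z_\beta$, so $W_k$ has property (A) with respect to $(Z_\alpha)$. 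Hence Lemma \ref{increasing_construction}, applied with $\F=\set{\Omega}$, produces the desired well-ordered increasing chain $(\Q_\alpha:\ \alpha\in\kappa)$ of prime $z$-filters with $Z_\alpha\in \Q_\beta\setminus \Q_\alpha$ whenever $\alpha<\beta$. Setting $Q_\alpha=\C_0(\Omega)\cap\zero^{-1}[\Q_\alpha]$ gives the corresponding well-ordered increasing chain of prime $z$-ideals, whose strictness is witnessed by $f_\alpha\in Q_\beta\setminus Q_\alpha$.

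The main obstacle, as far as I can see, is essentially cosmetic rather than substantive: unlike in Theorem \ref{increasing_chain_continuum}, there is no need to ``park'' the probe space at a point of level infinity of $\onepointcompactification{\Omega}$, because the embedding $X\hookrightarrow\Omega$ takes place inside $\Omega$ itself; and the transfer of property (B) from $X$ up to $\Omega$ is automatic once one arranges $Z_\alpha\cap X=N_\alpha$. The one genuine point to verify is that the zero sets $N_\alpha$ on $X$ do extend to zero sets of functions in $\C_0(\Omega)$ (not merely in $\C(\Omega)$), and this is immediate from the $\sigma$-compactness of open subsets of a locally compact Polish space.
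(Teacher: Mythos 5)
Your proposal is correct and follows essentially the same route the paper intends (its proof is just ``Similar to previous proofs''): embed $(\onepointcompactification{\naturals})^\naturals$ as a closed subset of $\Omega$ via the Cantor set, invoke Lemma \ref{almost_disjoint2}, transfer property (B) to $\Omega$ using that every closed subset of a locally compact Polish space is the zero set of a $\C_0$-function, and apply Lemma \ref{increasing_construction} with $\F=\set{\Omega}$. Your explicit verification that property (B) lifts from $X$ to $\Omega$ fills in a step the paper leaves implicit even in Theorem \ref{increasing_chain_continuum}.
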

\begin{proof}
Similar to previous proofs.
\end{proof}

\begin{corollary}
    Let $\kappa$ be any ordinal of cardinality $\continuum$. Then
    there exists a well-ordered increasing chain of order type $\kappa$ of prime $z$-filters on
    $\reals$. \enproof
\end{corollary}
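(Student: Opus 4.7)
The plan is to obtain this as an immediate specialization of the preceding theorem. Indeed, $\reals$ with its usual topology is separable (the rationals are dense) and completely metrizable (the standard metric is complete), so it is a Polish space; and it is clearly both locally compact and uncountable. Hence $\reals$ satisfies all hypotheses of the theorem, and applying the theorem with $\Omega=\reals$ and the given ordinal $\kappa$ of cardinality $\continuum$ directly yields a well-ordered increasing chain $(\Q_\alpha:\ \alpha\in \kappa)$ of prime $z$-filters on $\reals$.

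There is no additional obstacle to address: unlike the analogous decreasing corollary, the statement here does not require that the chain start from a prescribed non-minimal prime $z$-filter, so no appeal to a result of Mandelker (or any further argument) is needed. In particular, we do not need to worry that $\reals$ is $\sigma$-compact, because the cardinality restriction $\cardinal{\kappa}\le\continuum$ is already built into the hypothesis.

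If one wished, the proof could be expanded to produce also a corresponding chain of prime $z$-ideals by setting $Q_\alpha=\C_0(\reals)\cap\zero^{-1}[\Q_\alpha]$, which is the chain supplied by the second assertion of the theorem; this would run in parallel with the pattern used throughout \S\ref{decreasingsection}, in particular exploiting that every zero set on $\reals$ is the zero set of some $f\in\C_0(\reals)$. But since the corollary only asserts the existence of the chain of prime $z$-filters, no such extension is required and the one-line application of the theorem is the entire proof.
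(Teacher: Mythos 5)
Your proof is correct and coincides with the paper's own (implicit) argument: the corollary is stated with an end-of-proof mark precisely because it is the immediate specialization of the preceding theorem to $\Omega=\reals$, which is an uncountable locally compact Polish space. Your additional remarks about not needing a Mandelker-type starting-point result are accurate and consistent with how the paper treats this corollary differently from the decreasing-chain one.
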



\providecommand{\bysame}{\leavevmode\hbox
to3em{\hrulefill}\thinspace}
\providecommand{\MR}{\relax\ifhmode\unskip\space\fi MR }
\providecommand{\MRhref}[2]{%
  \href{http://www.ams.org/mathscinet-getitem?mr=#1}{#2}
} \providecommand{\href}[2]{#2}

\end{document}